\documentclass[11pt,reqno]{article}
\usepackage{amsmath,amssymb,amsthm}
\pagestyle{plain}
\newtheorem{theorem}{Theorem}[section]
\newtheorem{lemma}[theorem]{Lemma}

\newtheorem{corollary}[theorem]{Corollary}

\newtheorem{remark}[theorem]{Remark}
\usepackage[english]{babel}
\setlength{\topmargin}{0cm}
\setlength{\oddsidemargin}{0cm}
\setlength{\evensidemargin}{0cm}
\setlength{\textwidth}{16truecm}
\setlength{\textheight}{21truecm}
\numberwithin{equation}{section}
%%%%%%%%%%%%%%%%%%%%%%%%%%%%%%%%%%%%%%%%%%%%%%%%%%%%%%%%%%%%%%%%%%%%%%%%%%%%%

\title{\bf Small data global well-posedness for the inhomogeneous biharmonic NLS in Sobolev spaces}
\author{{JinMyong An, PyongJo Ryu, JinMyong Kim$^*$}\\
\footnotesize{Faculty of Mathematics, {\bf Kim Il Sung} University, Pyongyang, Democratic People's Republic of Korea}\\%address
\footnotesize{$^*$ Corresponding Author: JinMyong Kim (jm.kim0211@ryongnamsan.edu.kp)}}
\date{}
\begin{document}
\maketitle
\begin{abstract}
In this paper, we study the Cauchy problem for the inhomogeneous biharmonic nonlinear Schr\"{o}dinger equation (IBNLS)
\[iu_{t} +\Delta^{2} u=\lambda |x|^{-b}|u|^{\sigma}u,u(0)=u_{0} \in H^{s} (\mathbb R^{d}),\]
where  $\lambda \in \mathbb R$, $d\in \mathbb N$, $0<s<\min \{2+\frac{d}{2},\frac{3}{2}d\}$ and $0<b<\min\{4,d,\frac{3}{2}d-s,\frac{d}{2}+2-s\}$.
Under some regularity assumption for the nonlinear term, we prove that the IBNLS equation is globally well-posed in $H^{s}(\mathbb R^{d})$ if $\frac{8-2b}{d}<\sigma< \sigma_{c}(s)$ and the initial data is sufficiently small, where $\sigma_{c}(s)=\frac{8-2b}{d-2s}$ if $s<\frac{d}{2}$, and $\sigma_{c}(s)=\infty$ if $s\ge \frac{d}{2}$.
\end{abstract}

\noindent {\bf Keywords}: Inhomogeneous biharmonic nonlinear Schr\"{o}dinger equation, Cauchy problem, Global well-posedness, Strichartz estimates.\\

\noindent {\bf Mathematics Subject Classification (2020)}: 35Q55, 35A01.
%%%%%%%%%%%%%%%%%%%%%%%%%%%%%%%%%%%%%%%%%%%%%%%%%%%%%%%%%%%%%%%%%%%%%%%%%%%%%
\section{Introduction}\label{sec 1.}

In this work, we study the Cauchy problem for the inhomogeneous biharmonic nonlinear Schr\"{o}dinger equation (IBNLS)
\begin{equation} \label{GrindEQ__1_1_}
\left\{\begin{array}{l} {iu_{t} +\Delta^{2}u=\lambda |x|^{-b} |u|^{\sigma} u,~(t,x)\in \mathbb R\times \mathbb R^{d},}\\
{u(0,x)=u_{0}(x)\in H^{s}(\mathbb R^{d})}, \end{array}\right.
\end{equation}
where $d\in \mathbb N$, $s\ge 0$, $0<b<4$, $\sigma>0$ and $\lambda \in \mathbb R$.
The IBNLS equation \eqref{GrindEQ__1_1_} has the following equivalent form:
\begin{equation}\nonumber
u(t)=e^{it\Delta^{2}}u_{0} -i\lambda \int _{0}^{t}e^{i(t-\tau)\Delta^{2}}|x|^{-b}|u(\tau)|^{\sigma}u(\tau)d\tau .
\end{equation}
The limiting case $b=0$ is the well-known classic biharmonic nonlinear Schr\"{o}dinger equation, also called the fourth-order nonlinear Schr\"{o}dinger equation. It was introduced by Karpman \cite{K96} and Karpman-Shagalov \cite{KS97} to take into account the role of small fourth-order dispersion terms in the propagation of intense laser beams in a bulk medium with Kerr nonlinearity and it has been widely studied during the last two decades. See, for example, \cite{D18B,D21,LZ21,P07} and the references therein.
The equation \eqref{GrindEQ__1_1_} has a counterpart for the Laplacian operator, namely, the inhomogeneous nonlinear Schr\"{o}dinger equation (INLS)
\begin{equation} \label{GrindEQ__1_2_}
iu_{t} +\Delta u=\lambda |x|^{-b} |u|^{\sigma} u.
\end{equation}
The INLS equation \eqref{GrindEQ__1_2_} arises in nonlinear optics for modeling the propagation of laser beam and it has attracted a lot of interest in recent years. See, for example, \cite{AT21,AK211,AK212,AKC21,C21,DK21,GM21} and the references therein.

First of all, we recall some facts about this equation. The IBNLS equation \eqref{GrindEQ__1_1_} enjoys the conservations of mass and energy, which are defined respectively by
\begin{equation} \label{GrindEQ__1_3_}
M\left(u(t)\right):=\int_{\mathbb R^{d}}{| u(t,x)|^{2}dx}=M\left(u_{0} \right),
\end{equation}
\begin{equation} \label{GrindEQ__1_4_}
E\left(u(t)\right):=\frac{1}{2}\int_{\mathbb R^{d}}{|\Delta u(t,x)|^2 dx}-\frac{\lambda
}{\sigma+2}\int_{\mathbb R^{d}}{|x|^{-b}\left|u(t,x)\right|^{\sigma+2}dx}=E\left(u_{0} \right).
\end{equation}
The IBNLS equation \eqref{GrindEQ__1_1_} is invariant under scaling $u_{\alpha}(t,x)=\alpha^{\frac{4-b}{\sigma}}u(\alpha^{4}t,\alpha x ),~\alpha >0$.
An easy computation shows that
$$
\left\|u_{\alpha}(t)\right\|_{\dot{H}^{s}}=\alpha^{s+\frac{4-b}{\sigma}-\frac{d}{2}}\left\|u(t)\right\|_{\dot{H}^{s}}.
$$
Hence, the critical Sobolev index is defined as follows:
\begin{equation} \label{GrindEQ__1_5_}
s_{c}:=\frac{d}{2}-\frac{4-b}{\sigma}.
\end{equation}
Putting
\begin{equation} \label{GrindEQ__1_6_}
\sigma_{c}(s):=
\left\{\begin{array}{cl}
{\frac{8-2b}{d-2s},} ~&{{\rm if}~s<\frac{d}{2},}\\
{\infty,}~&{{\rm if}~s\ge \frac{d}{2},}
\end{array}\right.
\end{equation}
we can easily see that $s>s_{c}$ is equivalent to $\sigma<\sigma_{c}(s)$. If $s<\frac{d}{2}$, $s=s_{c}$ is equivalent to $\sigma=\sigma_{c}(s)$.
For initial data $u_{0}\in H^{s}(\mathbb R^{d})$, we say that the Cauchy problem \eqref{GrindEQ__1_1_} is $H^{s}$-critical (for short, critical) if $0\le s<\frac{d}{2}$ and $\sigma=\sigma_{c}(s)$.
If $s\ge 0$ and $\sigma<\sigma_{c}(s)$, the problem \eqref{GrindEQ__1_1_} is said to be  $H^{s}$-subcritical (for short, subcritical).
Especially, if $\sigma =\frac{8-2b}{d}$, the problem is known as $L^{2}$-critical or mass-critical.
If $\sigma =\frac{8-2b}{d-4}$ with $d\ge 5$, it is called $H^{2}$-critical or energy-critical.
Throughout the paper, a pair $(p,q)$ is said to be  admissible, for short $(p,q)\in A$, if
\begin{equation}\label{GrindEQ__1_7_}
\frac{2}{p}+\frac{d}{q}\le\frac{d}{2},~p\in [2,\infty],~q\in [2,\infty).
\end{equation}
Especially, we say that $(p,q)\in A$ is biharmonic admissible (for short $(p,q)\in B$), if $\frac{4}{p}+\frac{d}{q}=\frac{d}{2}$. If $(p,q)\in A$ and $\frac{2}{p}+\frac{d}{q}=\frac{d}{2}$, then $(p,q)$ is said to be Schr\"{o}dinger admissible (for short $(p,q)\in S$).
We also denote for $(p,q)\in [1,\infty]^{2}$,
\begin{equation}\label{GrindEQ__1_8_}
\gamma_{p,q}=\frac{d}{2}-\frac{4}{p}-\frac{d}{q}.
\end{equation}

The IBNLS equation \eqref{GrindEQ__1_1_} has attracted a lot of interest in recent years. See, for example, \cite{CG21, CGP20, GP20, GP21, LZ212, S21} and the references therein.
Guzm\'{a}n-Pastor \cite{GP20} proved that \eqref{GrindEQ__1_1_} is locally well-posed in $L^{2}$, if $d\in \mathbb N$, $0<b<\min\left\{4,d\right\}$ and $0<\sigma<\sigma_{c}(0)$. Using the conservation of mass, this result is directly applied to obtain the global well-posedness result in $L^{2}$. They also established the local well-posedness in $H^{2}$ for $d\ge 3$, $0<b<\min\{4,\frac{d}{2}\}$, $\max\{0,\frac{2-2b}{d}\}<\sigma<\sigma_{c}(2)$.
Using the above local well-posedness result and the standard argument, they established the global well-posedness in $H^{2}$ for $\max\left\{0,\frac{2-2b}{d}\right\}<\sigma\le \frac{8-2b}{d}$. They also established the small data global well-posedness in $H^{2}$ for \eqref{GrindEQ__1_1_} in the following cases:

$\cdot$ $d\ge 8$, $0<b<4$ and $\frac{8-2b}{d}<\sigma<\frac{8-2b}{d-4}$,

$\cdot$ $d=5,6,7$, $0<b<\frac{d^2-8d+32}{8}$ and $\frac{8-2b}{d}<\sigma<\frac{d-2b}{d-4}$,

$\cdot$ $d=6,7$, $0<b<d-4$ and $\frac{8-2b}{d}<\sigma<\frac{8-2b}{d-4}$,

$\cdot$ $d=3,4$, $0<b<\frac{d}{2}$ and $\frac{8-2b}{d}<\sigma<\infty$.

\noindent The global well-posedness and scattering in $H^{2}$ in the intercritical case $\frac{8-2b}{d}<\sigma<\sigma_{c}(2)$ with $d\ge 3$ were also studied in \cite{CG21,GP21, S21}.
Afterwards, Cardoso-Guzm\'{a}n-Pastor \cite{CGP20} established the local and global well-posedness of \eqref{GrindEQ__1_1_} in $\dot{H}^{s_{c}}\cap \dot{H}^{2}$ with $d\ge 5$, $0<s_{c}<2$, $0<b<\min\left\{4,\frac{d}{2}\right\}$ and $\max\left\{1,\frac{8-2b}{d}\right\}<\sigma< \frac{8-2b}{d-4}$.
Liu-Zhang \cite{LZ212} established the local well-posedness in $H^{s}$ with $0<s\le 2$ by using the Besov space theory.
More precisely, they proved that the IBNLS equation \eqref{GrindEQ__1_1_} is locally well-posed in $H^{s}$ if $d\in \mathbb N$, $0<s\le 2$, $0<\sigma<\sigma_{c}(s)$, $0<b<\min\left\{4,\frac{d}{2}\right\}$. See Theorem 1.5 of \cite{LZ212} for details.
This result about the local well-posedness of \eqref{GrindEQ__1_1_} improves the one of \cite{GP20} by not only extending the validity of $d$ and $s$ but also removing the lower bound $\sigma>\frac{2-2b}{d}$.
This local well-posedness result is directly applied to obtain the global well-posedness result in $H^{2}$ in the full range of mass-subcritical and mass-critical cases $0<\sigma\le \frac{8-2b}{d}$ (see Corollary 1.10 of \cite{LZ212}).
Recently, the authors in \cite{ARK22} proved that the IBNLS equation \eqref{GrindEQ__1_1_} is locally well-posed in $H^s(\mathbb R^d)$, if
$d\in \mathbb N$, $0\le s <\min \left\{2+\frac{d}{2},\frac{3}{2}d\right\}$, $0<b<\min\left\{4,d,\frac{3}{2}d-s,\frac{d}{2}+2-s\right\}$, $0<\sigma<\sigma_{c}(s)$ and the following regularity assumption for the nonlinear term is further assumed:
\begin{equation} \label{GrindEQ__1_9_}
\sigma~\textrm{is an even integer, or}~ \sigma\ge \sigma_{\star}(s),
\end{equation}
where
\footnote[1]{Given $a\in \mathbb R$, $a^{+}$ denotes the fixed number slightly larger than $a$ ($a^{+}=a+\varepsilon$ with $\varepsilon>0$ small enough).}
\begin{equation} \label{GrindEQ__1_10_}
\sigma_{\star}(s):=
\left\{\begin{array}{ll}
{0,}~&{{\rm if}~{d\in \mathbb N ~{\rm and}~s\le 1,}}\\
{\left[ s-\frac{d}{2}\right]},~&{{\rm if}~d=1,2 ~{\rm and}~ s\ge 1+\frac{d}{2}},\\
{\lceil s\rceil-2},~&{{\rm if}~d\ge 3~{\rm and}~s>2}\\
{\left(\frac{2s-2b-2}{d}\right)^{+}},~&{\rm otherwise.}
\end{array}\right.
\end{equation}
As the continuation of the work \cite{ARK22}, this paper investigates the small data global well-posedness and scattering in $H^s$ with $0< s <\min \left\{2+\frac{d}{2},\frac{3}{2}d\right\}$ for the IBNLS equation \eqref{GrindEQ__1_1_}. To arrive at this goal, we establish the delicate nonlinear estimates (see Lemmas \ref{lem 3.2.} and \ref{lem 3.3.}) and use the contraction mapping principle based on Strichartz estimates with a gain of derivatives (see Lemma \ref{lem 2.7.}). Our main result is the following.
\begin{theorem}\label{thm 1.1.}
Let $d\in \mathbb N$, $0< s <\min \left\{2+\frac{d}{2},\frac{3}{2}d\right\}$, $0<b<\min\left\{4,d,\frac{3}{2}d-s,\frac{d}{2}+2-s\right\}$ and $\frac{8-2b}{d}<\sigma<\sigma_{c}(s)$. If $\sigma$ is not an even integer, assume further that
\begin{equation} \label{GrindEQ__1_11_}
\sigma\ge \tilde{\sigma}_{\star}(s):=
\left\{\begin{array}{ll}
{0,}~&{{\rm if}~d\in \mathbb N ~{\rm and}~s\le 1,}\\
{\left[ s-\frac{d}{2}\right],}~&{{\rm if}~d=1,2 ~{\rm and}~ s>1},\\
{\lceil s\rceil-2},~&{{\rm if}~d\ge 3~{\rm and}~s>1.}
\end{array}\right.
\end{equation}
Then for any $u_{0} \in H^{s}$ satisfying $\left\|u_{0}\right\|_{H^s}<\delta$ for some $\delta>0$ small enough, there exists a unique, global solution of \eqref{GrindEQ__1_1_} satisfying
\begin{equation} \label{GrindEQ__1_12_}
u\in C\left(\mathbb R,H^{s} \right)\cap L^{p} \left(\mathbb R,H_{q}^{s} \right),
\end{equation}
for any $(p,q)\in B$.
Furthermore, there exist $u_{0}^{\pm } \in H^{s}$ such that
\begin{equation} \label{GrindEQ__1_13_}
{\mathop{\lim }\limits_{t\to \pm \infty }} \left\| u(t)-e^{it\Delta^2 } u_{0}^{\pm } \right\| _{H^{s}} =0.
\end{equation}
\end{theorem}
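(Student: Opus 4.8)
The plan is to recast \eqref{GrindEQ__1_1_} as the fixed-point problem $u=\Phi(u)$, where
\[\Phi(u)(t):=e^{it\Delta^{2}}u_{0}-i\lambda\int_{0}^{t}e^{i(t-\tau)\Delta^{2}}|x|^{-b}|u(\tau)|^{\sigma}u(\tau)\,d\tau,\]
and to solve it by the contraction mapping principle in a Strichartz space that is \emph{global} in time. Since the hypothesis $\sigma>\frac{8-2b}{d}$ places us in the mass-supercritical regime, the smallness needed to close the estimates will come not from a short time interval but from the smallness of $\|u_{0}\|_{H^{s}}$, which controls the homogeneous Strichartz norms of $e^{it\Delta^{2}}u_{0}$ over all of $\mathbb R$. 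Concretely, I would work in a complete metric space of the form
\[X=\Big\{u:\ \|u\|_{X}:=\sup_{(p,q)\in B}\|\langle\nabla\rangle^{s}u\|_{L^{p}(\mathbb R,L^{q})}\le M\Big\},\]
which in particular contains the endpoint $(\infty,2)\in B$ and hence the norm $\|u\|_{L^{\infty}(\mathbb R,H^{s})}$. For the contraction one measures differences in a weaker metric $d(u,v)=\|u-v\|_{Y}$ built from a single admissible pair without the $s$ derivatives, so as not to differentiate the non-smooth nonlinearity twice.

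The first step is to apply the Strichartz estimates of Lemma \ref{lem 2.7.}, exploiting the gain of derivatives afforded by the biharmonic smoothing, to obtain $\|\Phi(u)\|_{X}\lesssim \|u_{0}\|_{H^{s}}+\big\||x|^{-b}|u|^{\sigma}u\big\|_{N}$, where $N$ denotes the appropriate dual Strichartz norm. The second step is to invoke the nonlinear estimates of Lemmas \ref{lem 3.2.} and \ref{lem 3.3.} to bound the right-hand side by $\|u\|_{X}^{\sigma+1}$, and likewise to prove the difference estimate
\[\|\Phi(u)-\Phi(v)\|_{Y}\lesssim \big(\|u\|_{X}^{\sigma}+\|v\|_{X}^{\sigma}\big)\|u-v\|_{Y}.\]
Combining these gives $\|\Phi(u)\|_{X}\le C\|u_{0}\|_{H^{s}}+CM^{\sigma+1}$ and a contraction factor of size $M^{\sigma}$; choosing $M\sim\|u_{0}\|_{H^{s}}$ and then $\delta$ small enough that $CM^{\sigma}<\frac12$, the map $\Phi$ sends the ball of radius $M$ into itself and is a contraction there. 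The unique fixed point is the desired global solution, and a final application of the Strichartz inequality to a generic $(p,q)\in B$ recovers the continuity statement $u\in C(\mathbb R,H^{s})$ together with the full range \eqref{GrindEQ__1_12_}.

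The main obstacle is verifying that the exponent bookkeeping in the nonlinear estimates closes simultaneously across the whole admissible range $\frac{8-2b}{d}<\sigma<\sigma_{c}(s)$. The weight $|x|^{-b}$ must be split into the regions $|x|\le 1$ and $|x|>1$ and absorbed by different H\"older triples, and one must check that the resulting Lebesgue and Sobolev exponents remain admissible; this is exactly where the constraints $0<b<\min\{4,d,\frac32 d-s,\frac d2+2-s\}$ and the regularity assumption \eqref{GrindEQ__1_11_}, needed to make sense of $s$ fractional derivatives of $|u|^{\sigma}u$ when $\sigma$ is not an even integer, come into play. Once the global solution with finite Strichartz norm $\|u\|_{X}$ is in hand, scattering follows routinely: setting
\[u_{0}^{\pm}:=u_{0}-i\lambda\int_{0}^{\pm\infty}e^{-i\tau\Delta^{2}}|x|^{-b}|u(\tau)|^{\sigma}u(\tau)\,d\tau,\]
the tail contribution $\int_{t}^{\pm\infty}$ tends to zero in $H^{s}$ as $t\to\pm\infty$ by the finiteness of $\|u\|_{X}$, which yields \eqref{GrindEQ__1_13_}.
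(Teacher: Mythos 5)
Your proposal is correct and follows essentially the same route as the paper: a global-in-time contraction argument in a Strichartz space with $s$ derivatives, a weaker derivative-free metric for the difference estimate, smallness supplied by $\left\|u_{0}\right\|_{H^{s}}$, the near/far splitting of $|x|^{-b}$ handled by Lemmas \ref{lem 3.2.} and \ref{lem 3.3.}, and the standard Duhamel-tail argument for scattering. The only cosmetic difference is that the paper's norm $\left\|\cdot\right\|_{X}$ is a finite sum over the specific admissible pairs produced by those lemmas rather than a supremum over all of $B$.
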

As an immediate consequence of Theorem \ref{thm 1.1.}, we have the following small data global well-posedness and scattering in $H^{2}$.
\begin{corollary}\label{cor 1.2.}
Let $d\ge 2$, $0<b<\min\left\{4,\frac{d}{2}\right\}$ and $\frac{8-2b}{d}<\sigma<\sigma_{c}(2)$. Then for any $u_{0} \in H^{2}$ satisfying $\left\|u_{0}\right\|_{H^2}<\delta$ for some $\delta>0$ small enough, there exists a unique, global solution of \eqref{GrindEQ__1_1_} satisfying
\begin{equation} \nonumber
u\in C\left(\mathbb R,H^{2} \right)\cap L^{p} \left(\mathbb R,H_{q}^{2} \right),
\end{equation}
for any $(p,q)\in B$.
Moreover, there exist $u_{0}^{\pm } \in H^{2}$ such that
\begin{equation} \nonumber
{\mathop{\lim }\limits_{t\to \pm \infty }} \left\| u(t)-e^{it\Delta^2 } u_{0}^{\pm } \right\| _{H^{2}} =0.
\end{equation}
\end{corollary}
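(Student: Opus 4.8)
The plan is to realize the solution as the unique fixed point of the Duhamel map
\[
\Phi(u)(t)=e^{it\Delta^{2}}u_{0}-i\lambda\int_{0}^{t}e^{i(t-\tau)\Delta^{2}}|x|^{-b}|u(\tau)|^{\sigma}u(\tau)\,d\tau
\]
on a complete metric space whose norm measures \emph{global-in-time} Strichartz regularity, and then to read off scattering from the global finiteness of that norm. Concretely, I would fix a finite family of biharmonic admissible pairs $(p,q)\in B$ (the precise pairs being dictated by the nonlinear estimates of Lemmas \ref{lem 3.2.}--\ref{lem 3.3.}), take the working norm $\|u\|_{\mathcal S}$ to be the supremum over this family of the $L^{p}(\mathbb R,H^{s}_{q})$ norms together with a companion norm at the scaling-critical level $\dot H^{s_{c}}$, and let the metric space be a small closed ball $\{\|u\|_{\mathcal S}\le M\}$ equipped with the weakest of these norms so that it is complete. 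The hypothesis $\sigma>\frac{8-2b}{d}$ forces $s_{c}>0$, which supplies a genuine scale-invariant norm on which to iterate, while $\sigma<\sigma_{c}(s)$ (equivalently $s>s_{c}$) is what keeps the relevant time exponents finite and makes the ball invariant over all of $\mathbb R$ rather than on a short interval only.

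Applying the Strichartz estimates with a gain of derivatives (Lemma \ref{lem 2.7.}) to the free evolution gives $\|e^{it\Delta^{2}}u_{0}\|_{\mathcal S}\lesssim\|u_{0}\|_{H^{s}}$, the gain being exactly $\gamma_{p,q}=0$ on biharmonic admissible pairs, while the inhomogeneous Strichartz estimate controls the Duhamel term by a dual Strichartz norm of the nonlinearity $F(u):=|x|^{-b}|u|^{\sigma}u$, possibly after trading a derivative gain against the integrability exponents.

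The heart of the matter is to bound this dual norm by $\|u\|_{\mathcal S}^{\sigma+1}$, which is precisely the content of Lemmas \ref{lem 3.2.} and \ref{lem 3.3.}: the regularity hypothesis \eqref{GrindEQ__1_11_} is used to control the $s$ fractional derivatives landing on $|u|^{\sigma}u$ when $\sigma$ is not an even integer (via a fractional chain/product rule), and the constraints $0<b<\min\{4,d,\tfrac{3}{2}d-s,\tfrac{d}{2}+2-s\}$ are used to absorb the singular factor $|x|^{-b}$ after decomposing space into $|x|\le1$ and $|x|\ge1$. Combining the last two paragraphs yields $\|\Phi(u)\|_{\mathcal S}\le C\|u_{0}\|_{H^{s}}+C\|u\|_{\mathcal S}^{\sigma+1}$ and an analogous Lipschitz estimate for $\|\Phi(u)-\Phi(v)\|$ in the weaker metric; taking $M\sim\delta$ with $\delta$ small enough then makes $\Phi$ a contraction mapping the ball into itself, and the fixed point is the unique global solution. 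Membership in $C(\mathbb R,H^{s})$ and the bound \eqref{GrindEQ__1_12_} follow from the endpoint $(\infty,2)\in B$ together with continuity in $t$.

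For the scattering statement \eqref{GrindEQ__1_13_} I would exploit the global finiteness of $\|u\|_{\mathcal S}$ to show that $e^{-it\Delta^{2}}u(t)=u_{0}-i\lambda\int_{0}^{t}e^{-i\tau\Delta^{2}}F(u(\tau))\,d\tau$ is Cauchy in $H^{s}$ as $t\to\pm\infty$: the tail $\int_{R}^{t}$ is dominated by the dual Strichartz norm of $F(u)$ over $|\tau|\ge R$, which vanishes as $R\to\infty$, and this defines $u_{0}^{\pm}=u_{0}-i\lambda\int_{0}^{\pm\infty}e^{-i\tau\Delta^{2}}F(u(\tau))\,d\tau$. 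I expect the main obstacle to lie not in the abstract contraction but in the bookkeeping that forces everything to close \emph{globally} in time, namely selecting the biharmonic admissible pairs so that the derivative gains of Lemma \ref{lem 2.7.} match precisely the derivative loss and the weight handling of Lemmas \ref{lem 3.2.}--\ref{lem 3.3.} while all time exponents stay finite — the delicate arithmetic that ties $s$, $b$ and $\sigma$ together through $s>s_{c}$ and $\sigma>\frac{8-2b}{d}$.
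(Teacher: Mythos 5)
Your outline is a correct sketch, but it is not the route the paper takes for this corollary: the paper obtains Corollary \ref{cor 1.2.} as a one-line specialization of Theorem \ref{thm 1.1.} at $s=2$, whereas you re-run the entire contraction-mapping machinery. What the paper's route requires (and what your proposal silently owes) is the verification that the corollary's hypotheses imply the theorem's: for $s=2$ and $d\ge 2$ one has $\min\{4,d,\tfrac{3}{2}d-2,\tfrac{d}{2}+2-2\}=\min\{4,\tfrac{d}{2}\}$, and the extra regularity condition \eqref{GrindEQ__1_11_} is vacuous, since for $d\ge 3$ it reads $\sigma\ge\lceil 2\rceil-2=0$, while for $d=2$ it reads $\sigma\ge[2-1]=1$, which follows automatically from $\sigma>\frac{8-2b}{d}=4-b>3$. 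Your self-contained route buys nothing beyond what Theorem \ref{thm 1.1.} already gives, and it contains one mild misattribution: the paper's working space $X$ consists solely of $L^{p}(\mathbb R,H^{s}_{q})$ norms over the biharmonic admissible pairs produced by Lemmas \ref{lem 3.2.} and \ref{lem 3.3.}; no companion norm at the critical level $\dot H^{s_c}$ is introduced, and none is needed, because the smallness driving the contraction comes from $\|u_0\|_{H^{s}}<\delta$ rather than from a scale-invariant quantity. The genuine role of $\sigma>\frac{8-2b}{d}$ is the one you mention at the end: it lets the time exponents in the H\"older step (the relation $\frac{1}{a'}=\frac{\sigma}{p}+\frac{1}{\bar p}$ and its analogue in \eqref{GrindEQ__3_19_}) close exactly with all pairs admissible, leaving no positive power of $T$, which is what makes the fixed point global in time. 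One further small point matching the paper: the Lipschitz estimate for the difference is run only in the derivative-free metric $d(u,v)$ using the estimates of Lemma \ref{lem 3.2.}, precisely because the fractional chain rule does not give a good difference estimate at regularity $s$; your phrase ``in the weaker metric'' is consistent with this, but it is worth being explicit that this is why the space is a ball in $X$ equipped with the weaker distance.
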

\begin{remark}\label{rem 1.3.}
\textnormal{The global well-posedness in $H^2$ for \eqref{GrindEQ__1_1_} with $\sigma\le\frac{8-2b}{d}$ were obtained by \cite{LZ212}. See Corollary 1.10 of \cite{LZ212} for details.}
\end{remark}
\begin{remark}\label{rem 1.4.}
\textnormal{Corollary \ref{cor 1.2.} improves the small data global well-posedness results of \cite{CG21, GP20, GP21} by extending the validity of $d$, $b$ and $\sigma$.}
\end{remark}

This paper is organized as follows. In Section \ref{sec 2.}, we recall some notation and give some preliminary results related to our problem. In Section \ref{sec 3.}, we establish the various nonlinear estimates to prove Theorem \ref{thm 1.1.}.

%%%%%%%%%%%%%%%%%%%%%%%%%%%%%%%%%%%%%%%%%%%%%%%%%%%%%%%%%%%%%%%%%%%%%%%%%%%%%
\section{Preliminaries}\label{sec 2.}
We begin by introducing some basic notation.
$C>0$ expresses the universal positive constant, which can be different at different places.
The notation $a\lesssim b$ means $a\le Cb$ for some constant $C>0$.
For $p\in \left[1,\infty \right]$, $p'$ denotes the dual number of $p$, i.e. $1/p+1/p'=1$.
For $s\in\mathbb R$, we denote by $[s]$ the largest integer which is less than or equal to $s$ and by $\left\lceil s\right\rceil $ the minimal integer which is larger than or equal to $s$.
As in \cite{WHHG11}, for $s\in \mathbb R$ and $1<p<\infty $, we denote by $H_{p}^{s} (\mathbb R^{d} )$ and $\dot{H}_{p}^{s} (\mathbb R^{d} )$ the nonhomogeneous Sobolev space and homogeneous Sobolev space, respectively. As usual, we abbreviate $H_{2}^{s} (\mathbb R^{d} )$ and $\dot{H}_{2}^{s} (\mathbb R^{d} )$ as $H^{s} (\mathbb R^{d} )$ and $\dot{H}^{s} (\mathbb R^{d} )$, respectively. Given two normed spaces $X$ and $Y$, $X\hookrightarrow Y$ means that $X$ is continuously embedded in $Y$, i.e. there exists a constant $C\left(>0\right)$ such that $\left\| f\right\| _{Y} \le C\left\| f\right\| _{X} $ for all $f\in X$. If there is no confusion, $\mathbb R^{d} $ will be omitted in various function spaces.

Next, we recall some useful facts which are used throughout the paper.
First of all, we recall some useful embeddings on Sobolev spaces. See \cite{WHHG11} for example.

\begin{lemma}\label{lem 2.1.}
Let $-\infty <s_{2} \le s_{1} <\infty $ and $1<p_{1} \le p_{2} <\infty $ with $s_{1} -\frac{d}{p_{1} } =s_{2} -\frac{d}{p_{2} } $. Then we have the following embeddings:
\[\dot{H}_{p_{1} }^{s_{1} } \hookrightarrow \dot{H}_{p_{2} }^{s_{2} } ,~H_{p_{1} }^{s_{1} } \hookrightarrow H_{p_{2} }^{s_{2} } .\]
\end{lemma}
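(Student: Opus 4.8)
The statement is the scaling-invariant Sobolev embedding for the Riesz and Bessel potential spaces, and I would reduce it, in both the homogeneous and the inhomogeneous case, to the mapping properties of a fractional integration operator from $L^{p_1}$ into $L^{p_2}$. Put $\alpha:=s_1-s_2$. The hypothesis $s_1-\frac{d}{p_1}=s_2-\frac{d}{p_2}$ is precisely $\alpha=d\left(\frac{1}{p_1}-\frac{1}{p_2}\right)$, and since $1<p_1\le p_2<\infty$ we get $0\le\frac{\alpha}{d}=\frac{1}{p_1}-\frac{1}{p_2}<\frac{1}{p_1}<1$, hence $0\le\alpha<d$. If $\alpha=0$ then $p_1=p_2$ and $s_1=s_2$, so both embeddings are the identity and there is nothing to prove; thus I may assume $0<\alpha<d$.

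For the homogeneous embedding I would use the definition $\|f\|_{\dot H_{p_2}^{s_2}}=\|(-\Delta)^{s_2/2}f\|_{L^{p_2}}$ and factor $(-\Delta)^{s_2/2}=(-\Delta)^{-\alpha/2}(-\Delta)^{s_1/2}$. Up to a dimensional constant, $(-\Delta)^{-\alpha/2}$ is the Riesz potential $I_\alpha$, i.e. convolution with $|x|^{\alpha-d}$. Since $0<\alpha<d$, $1<p_1<p_2<\infty$ and $\frac{1}{p_2}=\frac{1}{p_1}-\frac{\alpha}{d}$, the Hardy--Littlewood--Sobolev inequality gives $\|I_\alpha g\|_{L^{p_2}}\lesssim\|g\|_{L^{p_1}}$; applying this with $g=(-\Delta)^{s_1/2}f$ yields $\|f\|_{\dot H_{p_2}^{s_2}}\lesssim\|f\|_{\dot H_{p_1}^{s_1}}$.

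For the inhomogeneous embedding I would argue similarly, writing $(1-\Delta)^{s_2/2}=(1-\Delta)^{-\alpha/2}(1-\Delta)^{s_1/2}$, so that it suffices to bound $(1-\Delta)^{-\alpha/2}\colon L^{p_1}\to L^{p_2}$. This operator is convolution with the Bessel kernel $G_\alpha$, which satisfies $G_\alpha(x)\lesssim|x|^{\alpha-d}$ near the origin and decays exponentially as $|x|\to\infty$. Splitting $G_\alpha=G_\alpha\mathbf{1}_{|x|\le 1}+G_\alpha\mathbf{1}_{|x|>1}$, the near part is dominated by the Riesz kernel and is therefore controlled by the same Hardy--Littlewood--Sobolev estimate as above, while the far part is bounded and integrable, hence lies in $L^{\rho}$ for every $\rho\in[1,\infty]$ and is handled by Young's convolution inequality with $\frac{1}{\rho}=1-\frac{1}{p_1}+\frac{1}{p_2}$. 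Combining the two pieces gives $\|f\|_{H_{p_2}^{s_2}}\lesssim\|f\|_{H_{p_1}^{s_1}}$.

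The proof is otherwise routine, and the only points that genuinely use the hypotheses are the following. First, the endpoint exclusion: the Hardy--Littlewood--Sobolev inequality fails at $p_1=1$ or $p_2=\infty$, which is exactly why one restricts to $1<p_1\le p_2<\infty$, and the degenerate case $\alpha=0$ must be separated out as above. Second, in the inhomogeneous case one cannot dispose of the full kernel by Young's inequality alone, because at the equality $\frac{1}{p_1}-\frac{1}{p_2}=\frac{\alpha}{d}$ the relevant exponent is borderline; this is precisely what forces the near/far split, with the singular near part reabsorbed into the Hardy--Littlewood--Sobolev step. All of this is standard Riesz/Bessel potential theory, recorded in \cite{WHHG11}.
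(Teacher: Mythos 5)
The paper does not prove this lemma at all: it is recalled as a known embedding with a citation to \cite{WHHG11}, so there is no argument of the authors' to compare against. Your proof is the standard and correct one — reduce to the boundedness of $(-\Delta)^{-\alpha/2}$ (resp.\ $(1-\Delta)^{-\alpha/2}$) from $L^{p_1}$ to $L^{p_2}$ with $\alpha=s_1-s_2=d(\tfrac{1}{p_1}-\tfrac{1}{p_2})\in[0,d)$, handle the Riesz potential by Hardy--Littlewood--Sobolev and the Bessel kernel by a near/far split (pointwise domination by $|x|^{\alpha-d}$ near the origin, Young's inequality for the exponentially decaying tail) — and it correctly isolates the degenerate case $\alpha=0$ and the endpoint restrictions $1<p_1\le p_2<\infty$. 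This is precisely the textbook derivation the citation points to, so the proposal is sound.
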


\begin{lemma}\label{lem 2.2.}
Let $-\infty <s<\infty $ and $1<p<\infty $. Then we have
\begin{enumerate}
\item $H_{p}^{s+\varepsilon } \hookrightarrow H_{p}^{s} $ $(\varepsilon >0)$,
\item $H_{p}^{s} =L^{p} \cap \dot{H}_{p}^{s}$ $(s>0)$.
\end{enumerate}
\end{lemma}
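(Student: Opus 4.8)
The plan is to work from the Bessel-potential characterisation $\|f\|_{H_p^s}=\|(I-\Delta)^{s/2}f\|_{L^p}$ and $\|f\|_{\dot H_p^s}=\|(-\Delta)^{s/2}f\|_{L^p}$, and to reduce both assertions to the $L^p$-boundedness of a few Fourier multiplier operators. The only analytic input needed is the Mikhlin--H\"ormander multiplier theorem, which applies precisely in the range $1<p<\infty$ assumed here: a symbol $m\in C^{\lfloor d/2\rfloor+1}(\mathbb R^d\setminus\{0\})$ with $|\partial^\alpha m(\xi)|\lesssim|\xi|^{-|\alpha|}$ for all $|\alpha|\le\lfloor d/2\rfloor+1$ defines a bounded operator on $L^p$.

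For part (1) I would factor $(I-\Delta)^{s/2}=(I-\Delta)^{-\varepsilon/2}(I-\Delta)^{(s+\varepsilon)/2}$ and use that $(I-\Delta)^{-\varepsilon/2}$ is convolution with the Bessel kernel $G_\varepsilon$, which for $\varepsilon>0$ is a nonnegative $L^1$ function; Young's inequality then yields $\|(I-\Delta)^{-\varepsilon/2}g\|_{L^p}\le\|G_\varepsilon\|_{L^1}\|g\|_{L^p}$ for every $p$, hence $\|f\|_{H_p^s}\lesssim\|f\|_{H_p^{s+\varepsilon}}$. (Alternatively, the symbol $\langle\xi\rangle^{-\varepsilon}$ directly meets the Mikhlin bounds.) Taking lower index $0$ and gap $s$ in this inequality also records the embedding $H_p^s\hookrightarrow L^p$, which I reuse below.

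For part (2) the goal is the equivalence $\|(I-\Delta)^{s/2}f\|_{L^p}\approx\|f\|_{L^p}+\|(-\Delta)^{s/2}f\|_{L^p}$. The direction $\gtrsim$ splits into $\|f\|_{L^p}\lesssim\|f\|_{H_p^s}$, which is the embedding just recorded, and $\|(-\Delta)^{s/2}f\|_{L^p}\lesssim\|(I-\Delta)^{s/2}f\|_{L^p}$, which follows from the multiplier $|\xi|^s\langle\xi\rangle^{-s}$. For the direction $\lesssim$ I would insert a radial cutoff $\phi$ equal to $1$ on $\{|\xi|\le1\}$ and supported in $\{|\xi|\le2\}$ and split $\langle\xi\rangle^s=\phi(\xi)\langle\xi\rangle^s+(1-\phi(\xi))\langle\xi\rangle^s$. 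The low-frequency symbol is smooth with compact support, hence an $L^p$ multiplier controlling $\|f\|_{L^p}$; the high-frequency symbol factors as $[(1-\phi(\xi))\langle\xi\rangle^s|\xi|^{-s}]\,|\xi|^s$, where the bracketed factor is a bona fide Mikhlin multiplier on the support of $1-\phi$, so the corresponding piece is controlled by $\|(-\Delta)^{s/2}f\|_{L^p}$. Combining the two frequency ranges gives $\lesssim$, and together with $\gtrsim$ this is exactly $H_p^s=L^p\cap\dot H_p^s$.

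The one place that needs care — and the reason for the frequency splitting — is the verification of the derivative bounds for the non-smooth symbols near the origin when $s$ is not an even integer. For $|\xi|^s\langle\xi\rangle^{-s}$ one uses that $|\xi|^s$ is homogeneous of degree $s$, so $\partial^\alpha|\xi|^s$ is homogeneous of degree $s-|\alpha|$ and therefore $O(|\xi|^{-|\alpha|})$ on $\{|\xi|\le1\}$ since $s>0$; away from the origin the symbol is smooth and decaying. For the $\lesssim$ direction the cutoff $1-\phi$ removes the origin entirely from the troublesome factor, so no singularity survives. These are all classical facts (see \cite{WHHG11}), so I would state the multiplier theorem once and then apply it mechanically to each symbol.
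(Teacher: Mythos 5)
Your proposal is correct, but it is worth pointing out that the paper does not prove Lemma 2.2 at all: it is quoted as a known fact with a pointer to the monograph \cite{WHHG11}, so there is no ``paper proof'' to match. Your argument is essentially the standard textbook proof that the citation stands in for. Part (1) via the factorization $(I-\Delta)^{s/2}=(I-\Delta)^{-\varepsilon/2}(I-\Delta)^{(s+\varepsilon)/2}$ and integrability of the Bessel kernel is clean (and, as you note, works for all $1\le p\le\infty$, not just $1<p<\infty$). Part (2) via the low/high frequency splitting, with the Mikhlin--H\"ormander theorem applied to $|\xi|^{s}\langle\xi\rangle^{-s}$, to $\phi(\xi)\langle\xi\rangle^{s}$, and to $(1-\phi(\xi))\langle\xi\rangle^{s}|\xi|^{-s}$, is exactly the classical route; your homogeneity check of the Mikhlin bounds near the origin, which is the only place $s>0$ enters, is the right point to emphasize. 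Two small things you could tighten if this were to be written out in full: (i) the identity in part (2) is an equality of spaces, so one should say explicitly that the multiplier computations are performed on tempered distributions, which is legitimate because $f\in L^{p}$ (or $f\in H^{s}_{p}$) is tempered and all symbols involved are bounded with Mikhlin bounds, the singular factor $|\xi|^{-s}$ only ever appearing multiplied by the cutoff $1-\phi$; (ii) in the $\gtrsim$ direction the Leibniz terms $\partial^{\beta}|\xi|^{s}\,\partial^{\alpha-\beta}\langle\xi\rangle^{-s}$ should be displayed once so the reader sees that $|\beta|\le|\alpha|$ and $s>0$ give the required $O(|\xi|^{-|\alpha|})$ bound. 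What your approach buys over the paper's is self-containedness and visibility of exactly where the hypotheses $1<p<\infty$ and $s>0$ are used; what the paper's choice buys is brevity, which is the sensible trade-off for a research article relying on classical Sobolev space theory.
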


\begin{corollary}\label{cor 2.3.}
Let $-\infty <s_{2} \le s_{1} <\infty $ and $1<p_{1} \le p_{2} <\infty $ with $s_{1} -\frac{d}{p_{1} } \ge s_{2} -\frac{d}{p_{2} } $. Then we have $H_{p_{1} }^{s_{1} } \hookrightarrow H_{p_{2} }^{s_{2} } $.
\end{corollary}
\begin{proof} The result follows from Lemma \ref{lem 2.1.} and Item 1 of Lemma \ref{lem 2.2.}.
\end{proof}

\begin{lemma}[\cite{K95}]\label{lem 2.4.}
Let $F\in C^{k}(\mathbb C,\mathbb C)$, $k\in \mathbb N \setminus \{0\}$. Assume that there is $\nu\ge k$ such that
$$
|D^{i}F(z)|\lesssim |z|^{\nu-i},~z\in \mathbb C,~i=1,2,\ldots,k.
$$
Then for $s \in [0,k]$ and $1<r,p<\infty$, $1<q\le \infty$ satisfying $\frac{1}{r}=\frac{1}{p}+\frac{\nu-1}{q}$, we have
$$
\left\|F(u)\right\|_{\dot{H}_{r}^{s}}\lesssim \left\|u\right\|_{L^q}^{\nu-1}\left\|u\right\|_{\dot{H}_{p}^{s}}.
$$
\end{lemma}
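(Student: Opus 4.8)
The plan is to prove the estimate by reducing the homogeneous Sobolev norm to the Riesz potential $D^s=(-\Delta)^{s/2}$, so that $\|f\|_{\dot H^s_r}\simeq\|D^sf\|_{L^r}$ for $1<r<\infty$, and then to peel off the derivatives one order at a time, combining the classical chain rule, the fractional Leibniz (Kato--Ponce) rule, a base fractional chain rule of order in $(0,1)$, and the Gagliardo--Nirenberg interpolation inequalities. The hypothesis $1<r,p<\infty$ is used precisely to invoke the boundedness of the Riesz transforms and of Mikhlin-type multipliers on $L^r$ and $L^p$, while the endpoint $q=\infty$ is harmless because $u$ enters the $L^q$ factors without derivatives and is handled by Hölder. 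Throughout, the assumption $\nu\ge k$ guarantees that every exponent $\nu-i$ with $i\le k$ is nonnegative, so the pointwise bounds $|D^iF(z)|\lesssim|z|^{\nu-i}$ carry no singular negative powers and the functions $F^{(i)}(u)$ are genuinely estimable.

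First I would dispose of the base cases $s\in[0,1]$. For $s=0$ the bound $|F(z)|\lesssim|z|^{\nu}$, obtained by integrating $|F'(w)|\lesssim|w|^{\nu-1}$ from the origin, gives $\|F(u)\|_{L^r}\lesssim\||u|^{\nu-1}\,|u|\|_{L^r}\le\|u\|_{L^q}^{\nu-1}\|u\|_{L^p}$ by Hölder with $\tfrac1r=\tfrac{\nu-1}q+\tfrac1p$. For $s=1$ the classical chain rule $\nabla F(u)=\partial_zF(u)\,\nabla u+\partial_{\bar z}F(u)\,\overline{\nabla u}$, the bound $|F'(u)|\lesssim|u|^{\nu-1}$, and the same Hölder split yield $\|\nabla F(u)\|_{L^r}\lesssim\|u\|_{L^q}^{\nu-1}\|\nabla u\|_{L^p}$. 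For the genuinely fractional range $s\in(0,1)$ I would use the standard fractional chain rule: the pointwise mean-value inequality $|F(u(x))-F(u(y))|\lesssim(|u(x)|^{\nu-1}+|u(y)|^{\nu-1})\,|u(x)-u(y)|$ together with the difference (Triebel--Lizorkin) characterization of $\dot H^s_r$ reduces the estimate to Hölder in the same exponents, with $F'(u)$ placed in $L^{q/(\nu-1)}$ and the fractional difference of $u$ in $L^p$.

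For general $s\in(1,k]$ I would argue by induction on $k$ (equivalently on $\lceil s\rceil$), the inductive hypothesis being the same estimate applied to $F'$, which obeys the analogous bounds with $k-1$ and $\nu-1$ in place of $k$ and $\nu$. Since the Riesz transforms commute with $D^{s-1}$ and are bounded on $L^r$, one has $\|D^sF(u)\|_{L^r}\simeq\sum_j\|D^{s-1}\partial_jF(u)\|_{L^r}$, and the chain rule writes $\partial_jF(u)$ as $F'(u)$ times $\partial_j u$. The fractional Leibniz rule of order $s-1$ then splits each term into two. In the first, $D^{s-1}$ falls on $\partial_j u$, producing $\|F'(u)\|_{L^{r_3}}\|D^su\|_{L^p}$ with $\|F'(u)\|_{L^{r_3}}\lesssim\|u\|_{L^q}^{\nu-1}$ by Hölder exactly as above. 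In the second, $D^{s-1}$ falls on $F'(u)$, and the inductive hypothesis gives $\|D^{s-1}F'(u)\|_{L^{r_1}}\lesssim\|u\|_{L^q}^{\nu-2}\|D^{s-1}u\|_{L^{p_1}}$, which is multiplied by $\|\nabla u\|_{L^{r_2}}$; the two factors carrying derivatives are redistributed by Gagliardo--Nirenberg, $\|D^{s-1}u\|_{L^{p_1}}\lesssim\|u\|_{L^q}^{1/s}\|D^su\|_{L^p}^{(s-1)/s}$ and $\|\nabla u\|_{L^{r_2}}\lesssim\|u\|_{L^q}^{(s-1)/s}\|D^su\|_{L^p}^{1/s}$, so that the exponents of $\|D^su\|_{L^p}$ sum to one and those of $\|u\|_{L^q}$ sum to $\nu-1$. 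Scaling invariance then pins the Lebesgue relation to $\tfrac1r=\tfrac1p+\tfrac{\nu-1}q$. The constraint $s\le k$ is exactly what keeps $s-1\le k-1$, so that $F'\in C^{k-1}$ with $|F''(z)|\lesssim|z|^{\nu-2}$ legitimately feeds the inductive hypothesis.

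The main obstacle is the fractional bookkeeping rather than any single inequality: one must verify that, at every stage, the Lebesgue exponents produced by the fractional Leibniz rule and by Gagliardo--Nirenberg all lie in the admissible open range $(1,\infty)$ and that the derivative weights add correctly, so that the single homogeneity relation $\tfrac1r=\tfrac1p+\tfrac{\nu-1}q$ is preserved throughout the recursion. Establishing the base fractional chain rule on $(0,1)$ cleanly, via the difference characterization of $\dot H^s_r$ and the pointwise mean-value bound, is the technical heart on which the whole induction rests.
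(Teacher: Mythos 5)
The paper never proves this lemma: it is imported verbatim from Kato \cite{K95}, so there is no in-paper argument to compare against, and your proposal has to be judged on its own merits. On those merits it is essentially correct, and it is in outline the standard (indeed Kato's own) argument: a fractional chain rule on $(0,1)$, the classical chain rule at order one, and then induction in which one writes $\partial_j F(u)=F_z(u)\partial_j u+F_{\bar z}(u)\partial_j\bar u$, applies the Kato--Ponce fractional Leibniz rule of order $s-1$, and redistributes derivatives by Gagliardo--Nirenberg. Your bookkeeping checks out: with $\frac{1}{r_1}=\frac{\nu-2}{q}+\frac{1}{p_1}$, $\frac{1}{p_1}=\frac{1}{s}\cdot\frac{1}{q}+\frac{s-1}{s}\cdot\frac{1}{p}$ and $\frac{1}{r_2}=\frac{s-1}{s}\cdot\frac{1}{q}+\frac{1}{s}\cdot\frac{1}{p}$ one indeed recovers $\frac{1}{r_1}+\frac{1}{r_2}=\frac{\nu-1}{q}+\frac{1}{p}=\frac{1}{r}$, the powers of $\left\|u\right\|_{L^q}$ and $\left\|D^su\right\|_{L^p}$ sum to $\nu-1$ and $1$ respectively, $\nu\ge k$ keeps $\nu-2\ge0$ whenever the inductive step is invoked, and $\frac{\nu-1}{q}<\frac{1}{r}<1$ keeps all auxiliary exponents in the admissible range.

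Two caveats you should make explicit. First, the case $s=0$ (and your integration of $F'$ from the origin) requires the normalization $F(0)=0$: the hypotheses as stated only control $D^iF$ for $i\ge1$, and without $F(0)=0$ the conclusion fails at $s=0$ (take $F\equiv1$). This is harmless for the model nonlinearity $\lambda|u|^{\sigma}u$, and Kato's original formulation includes the $i=0$ bound, but your proof silently uses it. Second, the base case $s\in(0,1)$ is genuinely more delicate than ``difference characterization plus H\"older'': for the Triebel--Lizorkin norm $\dot H_r^{s}=\dot F_{r,2}^{s}$ the shifted factor $|u(x+h)|^{\nu-1}$ inside the square function must be dominated by a Hardy--Littlewood maximal function before H\"older can be applied, which is exactly the content of the Christ--Weinstein fractional chain rule. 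As long as you cite that result (reference \cite{CW91} of this paper) rather than claim to reprove it in two lines, your reduction is sound; if you intend the difference argument to be a self-contained proof, that step is where the real work lies and your sketch, as written, does not supply it.
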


Lemma \ref{lem 2.4.} applies in particular to the model case $F(u)=\lambda|u|^{\sigma}u$ with $\lambda\in \mathbb R$.
\begin{corollary}\label{cor 2.5.}
Let $\sigma>0$, $s\ge 0$ and $1<r,p<\infty$, $1<q\le \infty$ satisfying $\frac{1}{r}=\frac{1}{p}+\frac{\sigma}{q}$. If $\sigma$ is not an even integer, assume that $\sigma\ge \lceil s\rceil-1$. Then we have
$$
\left\||u|^{\sigma}u\right\|_{\dot{H}_{r}^{s}}\lesssim \left\|u\right\|_{L^q}^{\sigma}\left\|u\right\|_{\dot{H}_{p}^{s}}.
$$
\end{corollary}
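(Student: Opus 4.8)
The plan is to read the left-hand side as the model nonlinearity $F(u)=|u|^{\sigma}u$ and to invoke the fractional chain rule of Lemma \ref{lem 2.4.} with the choices $\nu=\sigma+1$ and $k=\lceil s\rceil$. When $s=0$ there is nothing to differentiate and the bound is just Hölder's inequality, since $\bigl||u|^{\sigma}u\bigr|=|u|^{\sigma}\,|u|$ and $\tfrac1r=\tfrac{\sigma}{q}+\tfrac1p$; so I may assume $s>0$, whence $k=\lceil s\rceil\ge 1$ and $s\in[0,k]$. With these choices the exponent relation demanded by Lemma \ref{lem 2.4.}, namely $\tfrac1r=\tfrac1p+\tfrac{\nu-1}{q}=\tfrac1p+\tfrac{\sigma}{q}$, is exactly our hypothesis, and its conclusion reads $\||u|^{\sigma}u\|_{\dot{H}_{r}^{s}}\lesssim\|u\|_{L^{q}}^{\nu-1}\|u\|_{\dot{H}_{p}^{s}}=\|u\|_{L^{q}}^{\sigma}\|u\|_{\dot{H}_{p}^{s}}$, which is precisely the assertion. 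Thus everything reduces to checking the two structural hypotheses of Lemma \ref{lem 2.4.}: that $F\in C^{k}(\mathbb C,\mathbb C)$ and that $|D^{i}F(z)|\lesssim|z|^{\nu-i}$ for $i=1,\dots,k$ with $\nu\ge k$.

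The derivative bounds come for free from scaling. Away from the origin $F(z)=|z|^{\sigma}z$ is smooth and positively homogeneous of degree $\sigma+1$, so each $D^{i}F$ is positively homogeneous of degree $\sigma+1-i$; bounding $D^{i}F$ on the unit circle (where it is continuous) and rescaling gives $|D^{i}F(z)|\lesssim|z|^{\sigma+1-i}$ for all $z\neq0$. This is exactly the required growth with $\nu=\sigma+1$, and the remaining condition $\nu\ge k$ translates into $\sigma+1\ge\lceil s\rceil$, i.e. $\sigma\ge\lceil s\rceil-1$ — precisely the standing hypothesis in the non-even case.

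It then remains to verify $F\in C^{k}$, where the even/non-even dichotomy is decisive. If $\sigma$ is an even integer then $|z|^{\sigma}=(z\bar z)^{\sigma/2}$, so $F=(z\bar z)^{\sigma/2}z$ is a polynomial in $z$ and $\bar z$; it is therefore $C^{\infty}$, and the estimate holds for every $s\ge0$ (for $\lceil s\rceil\le\sigma+1$ directly from Lemma \ref{lem 2.4.}, and for larger $s$ by iterating the fractional Leibniz rule on the monomials of $F$), with no lower bound on $\sigma$ — which is exactly why that assumption is omitted in this case. If $\sigma$ is not an even integer and $\sigma+1>\lceil s\rceil=k$, then the homogeneity degrees $\sigma+1-i$ are strictly positive for every $i\le k$, so each $D^{i}F$ extends continuously to the origin with value $0$; hence $F\in C^{k}$ and Lemma \ref{lem 2.4.} applies directly.

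I expect the genuine difficulty to sit entirely at the origin, where $|z|^{\sigma}$ fails to be smooth for non-even $\sigma$: away from $0$ the homogeneity computation is routine, and the content of the hypothesis $\sigma\ge\lceil s\rceil-1$ is precisely that it keeps the relevant derivative degrees nonnegative so that nothing blows up there. The sharpest point is the borderline $\sigma$ an odd integer with $\sigma+1=\lceil s\rceil$: there $D^{\sigma+1}F$ is positively homogeneous of degree $0$, hence bounded but generally discontinuous at $0$, so $F$ is only $C^{\sigma}=C^{k-1}$ and Lemma \ref{lem 2.4.} does not apply verbatim. In that sub-case I would appeal to the endpoint form of the chain-rule estimate, or recover the bound by controlling the intermediate product terms generated by the chain rule through Gagliardo–Nirenberg interpolation, which suffices to close the estimate while keeping only $L^{q}$ norms on the low-order factors.
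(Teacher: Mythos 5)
Your proposal is correct and follows essentially the same route as the paper, which simply invokes Lemma \ref{lem 2.4.} for the model nonlinearity $F(z)=|z|^{\sigma}z$ with $\nu=\sigma+1$ and $k=\lceil s\rceil$; your verification of the homogeneity bounds and of $\nu\ge k\Leftrightarrow\sigma\ge\lceil s\rceil-1$ is exactly what that reduction requires. You are in fact more careful than the paper in isolating the borderline sub-case ($\sigma$ an odd integer with $\sigma+1=\lceil s\rceil$, where $F\notin C^{k}$ and Lemma \ref{lem 2.4.} does not apply verbatim) and in handling even integer $\sigma$ with $\lceil s\rceil>\sigma+1$ via the product rule, both of which the paper passes over silently.
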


Next, we recall the well-known fractional product rule.

\begin{lemma}[Fractional Product Rule, \cite{CW91}]\label{lem 2.6.}
Let $s\ge 0$, $1<r,r_{2},p_{1}<\infty$ and $1<r_{1},p_{2}\le\infty$. Assume that
\[\frac{1}{r} =\frac{1}{r_{i} } +\frac{1}{p_{i} }\;(i=1,2).\]
Then we have
\begin{equation}
\left\| fg\right\| _{\dot{H}_{r}^{s} } \lesssim \left\| f\right\| _{r_{1} } \left\| g\right\| _{\dot{H}_{p_{1} }^{s} } +\left\| f\right\| _{\dot{H}_{r_{2} }^{s} } \left\| g\right\| _{p_{2} } .
\end{equation}
\end{lemma}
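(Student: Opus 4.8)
The plan is to establish the fractional Leibniz inequality by a Littlewood--Paley decomposition of the product combined with Bony's paraproduct calculus, which reduces the estimate to three frequency-localized pieces, each controlled by exactly one of the two terms on the right-hand side. (One may alternatively invoke \cite{CW91} directly; what follows is the self-contained argument I would reconstruct.)

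First I would fix a Littlewood--Paley family: smooth projections $\Delta_j$ onto frequencies $|\xi|\sim 2^j$ and $S_j=\sum_{k\le j}\Delta_k$ onto $|\xi|\lesssim 2^j$, and recall the square-function characterization $\|h\|_{\dot H^s_r}\sim\|(\sum_j 2^{2js}|\Delta_j h|^2)^{1/2}\|_{L^r}$, valid for $1<r<\infty$ and $s\ge 0$; this is precisely why the hypotheses require $1<r,r_2,p_1<\infty$. I would then write
\[
fg=\sum_j S_{j-2}f\,\Delta_j g+\sum_j \Delta_j f\,S_{j-2}g+\sum_{|j-k|\le 1}\Delta_j f\,\Delta_k g=:\Pi_1+\Pi_2+\Pi_3.
\]

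For the low--high paraproduct $\Pi_1$, each summand has Fourier support in an annulus of size $\sim 2^j$, so $\||\nabla|^s\Pi_1\|_{L^r}$ is comparable to $\|(\sum_j 2^{2js}|S_{j-2}f\,\Delta_j g|^2)^{1/2}\|_{L^r}$; bounding $|S_{j-2}f|$ pointwise by the Hardy--Littlewood maximal function $Mf$, pulling it out, and applying the Fefferman--Stein vector-valued maximal inequality followed by H\"older's inequality with $\frac1r=\frac1{r_1}+\frac1{p_1}$ gives $\|Mf\|_{L^{r_1}}\|g\|_{\dot H^s_{p_1}}\lesssim\|f\|_{L^{r_1}}\|g\|_{\dot H^s_{p_1}}$. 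The high--low term $\Pi_2$ is symmetric and yields $\|f\|_{\dot H^s_{r_2}}\|g\|_{L^{p_2}}$. The endpoint cases $r_1=\infty$ or $p_2=\infty$ are treated by replacing the maximal-function step with the trivial bound $\|S_{j-2}f\|_{L^\infty}\le\|f\|_{L^\infty}$.

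The main obstacle is the resonant (high--high) term $\Pi_3$: here two comparable high frequencies can cascade down to arbitrarily low output frequency, so the clean annular localization used for $\Pi_1,\Pi_2$ is unavailable and a naive estimate loses summability in the frequency parameter. I would overcome this by grouping according to the output frequency $2^\ell$, noting that only pairs with $2^j\gtrsim 2^\ell$ contribute, using Bernstein's inequality to replace $|\nabla|^s$ on the output ball by a factor $2^{\ell s}$ together with the elementary bound $2^{\ell s}\lesssim 2^{js}$ valid because $s\ge 0$ and $\ell\lesssim j$ (this is exactly where nonnegativity of $s$ enters, and where the argument fails for $s<0$), and then performing a Schur-type summation over $j$ after shifting the $s$ derivatives onto whichever factor one prefers; the two choices reproduce the two terms on the right-hand side. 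Collecting the bounds for $\Pi_1$, $\Pi_2$, $\Pi_3$ then yields the claimed inequality.
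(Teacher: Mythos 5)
The paper offers no proof of this lemma at all: it is recalled as a classical result with a bare citation to \cite{CW91}, so any argument you supply is necessarily a different route. What you propose is the standard modern paraproduct proof of the Leibniz-type estimate, and the outline is essentially sound: the low--high and high--low pieces go exactly as you say (annular Fourier support, domination of $S_{j-2}f$ by $Mf$, Fefferman--Stein, then H\"older with $\frac1r=\frac1{r_1}+\frac1{p_1}$, and the trivial $L^\infty$ bound at the endpoints $r_1=\infty$ or $p_2=\infty$), while the resonant piece is controlled by a Schur summation against the kernel $2^{(\ell-j)s}\mathbf{1}_{\{j\ge \ell-C\}}$ followed by the square-function characterization on whichever factor receives the derivatives. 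Two points would need care in a full write-up. First, that Schur kernel is summable only for $s>0$; your claim that ``$s\ge 0$'' suffices for the resonant term is not quite right, and you should dispose of $s=0$ separately at the outset, where $\dot H^0_r=L^r$ and the whole lemma is just H\"older's inequality. Second, your endpoint discussion covers only $\Pi_1$ and $\Pi_2$; in $\Pi_3$ with $r_1=\infty$ (or $p_2=\infty$) one cannot apply the Littlewood--Paley characterization to the $L^\infty$ factor, and must instead pull it out pointwise via $\sup_k\left|\Delta_k f\right|\lesssim\|f\|_{L^\infty}$ before summing --- consistent with your phrase about ``shifting the derivatives onto whichever factor one prefers,'' but worth stating. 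Neither issue is structural; both are routine to repair, and the approach correctly isolates why the hypotheses demand $1<r,r_2,p_1<\infty$ while permitting $r_1,p_2=\infty$.
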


We end this section with recalling the Strichartz estimates for the fourth-order Schr\"{o}dinger equation. See, for example, \cite{D18I} and the reference therein.
\begin{lemma}[Strichartz estimates]\label{lem 2.7.}
Let $\gamma\in \mathbb R$ and $u$ be the solution to the linear fourth-order Schr\"{o}dinger equation, namely
\begin{equation}\nonumber
u(t)=e^{it\Delta^{2}}u_{0}+i\int_{0}^{t}{e^{i(t-s)\Delta^{2}}F(s)ds,}
\end{equation}
for some data $u_{0}$ and $F$. Then for all $(p,q)$ and $(a,b)$ admissible,
$$
\left\|u\right\|_{L^{p}(\mathbb R,\dot{H}_{q}^{\gamma})}\lesssim  \left\|u_{0}\right\|_{\dot{H}^{\gamma+\gamma_{p,q}}}+ \left\|F\right\|_{L^{a'}(\mathbb R,\dot{H}_{b'}^{\gamma+\gamma_{p,q}-\gamma_{a',b'}-4})},
$$
where $\gamma_{p,q}$ and $\gamma_{a',b'}$ are as in \eqref{GrindEQ__1_8_}.
\end{lemma}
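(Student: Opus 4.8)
The plan is to derive these space-time estimates from the fixed-time dispersive estimate for the free biharmonic group, feed the resulting decay into the abstract Keel--Tao machinery to obtain the sharp estimates on the biharmonic admissible line $B$, and finally pass to the general admissible class $A$ by Sobolev embedding, which is precisely what produces the correction exponents $\gamma_{p,q}$ and $\gamma_{a',b'}$.

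First I would establish the dispersive estimate. Writing the kernel of the unitary group as $K_t(x)=(2\pi)^{-d}\int_{\mathbb R^d}e^{i(x\cdot\xi+t|\xi|^4)}\,d\xi$ and rescaling $\xi=|t|^{-1/4}\eta$ gives $K_t(x)=|t|^{-d/4}G(|t|^{-1/4}x)$ with $G(y)=(2\pi)^{-d}\int e^{i(y\cdot\eta\pm|\eta|^4)}\,d\eta$. A van der Corput (stationary-phase) analysis of this oscillatory integral, the phase $|\eta|^4$ having nondegenerate Hessian away from the origin, yields $\|G\|_{L^\infty}\lesssim 1$, hence $\|e^{it\Delta^2}f\|_{L^\infty}\lesssim|t|^{-d/4}\|f\|_{L^1}$. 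Interpolating this with the unitarity $\|e^{it\Delta^2}f\|_{L^2}=\|f\|_{L^2}$ gives $\|e^{it\Delta^2}f\|_{L^q}\lesssim|t|^{-\frac d4(1-\frac2q)}\|f\|_{L^{q'}}$ for $2\le q\le\infty$, i.e. a decay of order $\sigma=d/4$.

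Second, feeding this decay into the $TT^{*}$ argument, Keel--Tao for the homogeneous and the untruncated inhomogeneous estimates together with the Christ--Kiselev lemma to reinstate the retarded cutoff $\int_0^t$ when $(p,q)\neq(a,b)$, produces the estimates for pairs on the sharp line. The Keel--Tao admissibility $\frac1p=\frac\sigma2(1-\frac2q)$ with $\sigma=d/4$ is exactly $\frac4p+\frac dq=\frac d2$, that is $(p,q)\in B$. Since $|\nabla|^\gamma$ commutes with $e^{it\Delta^2}$, commuting it through upgrades these to $\|e^{it\Delta^2}u_0\|_{L^p_t\dot H^\gamma_q}\lesssim\|u_0\|_{\dot H^\gamma}$ and $\|\int_0^t e^{i(t-s)\Delta^2}F\,ds\|_{L^p_t\dot H^\gamma_q}\lesssim\|F\|_{L^{a'}_t\dot H^\gamma_{b'}}$ for all $(p,q),(a,b)\in B$. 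For such pairs one checks $\gamma_{p,q}=0$ and $\gamma_{a',b'}=-4$, so these coincide with the asserted inequalities.

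Third, I would pass from $B$ to $A$ by Sobolev embedding (Lemma~\ref{lem 2.1.}) in the space variable only. Given admissible $(p,q)$, let $\tilde q$ be the biharmonic partner of $p$, so $\frac4p+\frac d{\tilde q}=\frac d2$; then $\dot H^{\gamma+\gamma_{p,q}}_{\tilde q}\hookrightarrow\dot H^\gamma_q$, which converts the sharp output bound into the claimed one and accounts exactly for the $\gamma_{p,q}$ shift on the datum. On the forcing side, with $\tilde b$ the biharmonic partner of $a$, the embedding $\dot H^{\gamma+\gamma_{p,q}-\gamma_{a',b'}-4}_{b'}\hookrightarrow\dot H^{\gamma+\gamma_{p,q}}_{\tilde b'}$ reduces matters to the sharp estimate at regularity $\gamma+\gamma_{p,q}$; a short computation using $\gamma_{a',\tilde b'}=-4$ shows that the regularity adjustment produced is precisely $-\gamma_{a',b'}-4$, matching the statement. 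The routine analytic input here is the oscillatory bound $\|G\|_{L^\infty}\lesssim 1$, while the delicate points I expect to cost the most care are the endpoint bookkeeping in Keel--Tao with the Christ--Kiselev step for off-diagonal $(p,q)\neq(a,b)$, and the verification that the embeddings of Lemma~\ref{lem 2.1.} genuinely run in the direction of increasing regularity and decreasing integrability, i.e. that $\tilde q\le q$ and $b'\le\tilde b'$ hold for the relevant pairs.
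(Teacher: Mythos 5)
Your Steps 1 and 2 are sound and do give the sharp estimates on the biharmonic line $B$: the kernel bound $\|G\|_{L^\infty}\lesssim 1$, hence $\|e^{it\Delta^2}f\|_{L^\infty}\lesssim|t|^{-d/4}\|f\|_{L^1}$, is classical, and Keel--Tao with $\sigma=d/4$ indeed reproduces $\frac4p+\frac dq=\frac d2$. The gap is in Step 3, and it is fatal for the full statement. The embedding $\dot H^{\gamma+\gamma_{p,q}}_{\tilde q}\hookrightarrow\dot H^{\gamma}_{q}$ of Lemma \ref{lem 2.1.} requires $\tilde q\le q$ and $\gamma_{p,q}\ge 0$, and both conditions are equivalent to $\frac4p+\frac dq\le\frac d2$. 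But the admissible class $A$ of \eqref{GrindEQ__1_7_} only requires $\frac2p+\frac dq\le\frac d2$; for every pair lying strictly between the Schr\"odinger and biharmonic lines --- in particular every Schr\"odinger-admissible pair with $p<\infty$, where $\gamma_{p,q}=-\frac2p<0$ --- the embedding runs in the forbidden direction ($q<\tilde q$, regularity increasing), which is exactly the check you flagged at the end: it fails. The same computation on the forcing side shows $b'\le\tilde b'$ if and only if $\gamma_{a,b}\ge0$. No spatial Sobolev embedding can repair this: for $\gamma_{p,q}<0$ the lemma asserts a genuine \emph{gain} of derivatives (the datum is measured in the weaker norm $\dot H^{\gamma+\gamma_{p,q}}$), a smoothing effect of the fourth-order flow that is simply not contained in the $B$-line estimates, and there is no embedding in time on all of $\mathbb R$ to borrow from other values of $p$. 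This missing region is not a corner case for this paper: in the proof of Lemma \ref{lem 3.3.} the pairs $(a_3,b_3)$ and $(a_4,b_4)$ are chosen in $S$ (Schr\"odinger admissible), so Theorem \ref{thm 1.1.} relies on precisely the part of Lemma \ref{lem 2.7.} that your argument does not reach.

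For comparison, the paper does not prove the lemma but recalls it from Dinh \cite{D18I}, and the proof there is organized differently so as to capture the gain: one proves the frequency-localized dispersive bound $\|e^{it\Delta^2}P_Nf\|_{L^\infty}\lesssim N^{d}\left(1+N^{4}|t|\right)^{-d/2}\|f\|_{L^1}$ for Littlewood--Paley pieces $P_N$, runs the $TT^*$/Keel--Tao argument at unit frequency with Schr\"odinger-type decay $\sigma=d/2$ (Young's inequality yields the interior $\frac2p+\frac dq<\frac d2$ of the class $A$, Hardy--Littlewood--Sobolev/Keel--Tao the boundary), and then rescales to frequency $N$ --- this rescaling, through the anisotropic scaling $t\mapsto N^{-4}t$, is where the factor $N^{\gamma_{p,q}}$ with the ``$4$'' appears --- before summing the dyadic pieces with the square function (using $p,q\ge2$) and reinstating the retarded cutoff via Christ--Kiselev. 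If you want to salvage your outline, Step 3 must be replaced by this Littlewood--Paley scheme; as written, your Steps 1--2 prove the lemma only under the stronger hypothesis $\gamma_{p,q}\ge0$ and $\gamma_{a,b}\ge0$.
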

%%%%%%%%%%%%%%%%%%%%%%%%%%%%%%%%%%%%%%%%%%%%%%%%%%%%%%%%%%%%%%%%%%%%%%%%%%%%%
\section{Proof of main result}\label{sec 3.}

In this section, we prove Theorem \ref{thm 1.1.}. Before establishing the various nonlinear estimates, we recall the following useful fact. See Remark 3.4 of \cite{AK212}.
\begin{remark}\label{rem 3.1.}
\textnormal{Let $1<r<\infty$, $s\ge0$ and $b>0$. Let $\chi\in C_{0}^{\infty}(\mathbb R^{d})$ satisfy $\chi(x)=1$ for $|x|\le 1$ and $\chi(x)=0$ for $|x|\ge 2$.
If $b+s<\frac{d}{r}$, then $\chi(x)|x|^{-b} \in \dot{H}_{r}^{s}$.
If $b+s>\frac{d}{r}$, then $(1-\chi(x))|x|^{-b}\in \dot{H}_{r}^{s}$.}
\end{remark}
\begin{lemma}\label{lem 3.2.}
Let $d\in \mathbb N$, $s>0$, $0<b<\min\left\{4,d\right\}$ and $\frac{8-2b}{d}<\sigma<\sigma_{c}(s)$.
Then there exist $(a_{i},b_{i})\in B$, $(p_{i},q_{i})\in B$ and $(\bar{p}_{i},\bar{q}_{i})\in B$ $(i=1,2)$ such that
\begin{equation} \label{GrindEQ__3_1_}
\left\|\chi(x)|x|^{-b}|u|^{\sigma}v\right\|_{L^{a_{1}'}(\mathbb R, L^{b_{1}'})}
\lesssim \left\|u\right\|^{\sigma}_{L^{p_{1}}(\mathbb R, H_{q_{1}}^{s})}\left\|v\right\|_{L^{\bar{p}_{1}}(\mathbb R, L^{\bar{q}_{1}})},
\end{equation}
\begin{equation} \label{GrindEQ__3_2_}
\left\|(1-\chi(x))|x|^{-b}|u|^{\sigma}v\right\|_{L^{a_{2}'}(\mathbb R, L^{b_{2}'})}
\lesssim \left\|u\right\|^{\sigma}_{L^{p_{2}}(\mathbb R, H_{q_{2}}^{s})}\left\|v\right\|_{L^{\bar{p}_{2}}(\mathbb R, L^{\bar{q}_{2}})}.
\end{equation}
where $\chi\in C_{0}^{\infty}(\mathbb R^{d})$ is given in Remark \ref{rem 3.1.}.
\end{lemma}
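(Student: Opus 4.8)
The plan is to estimate the two pieces separately, using the splitting $|x|^{-b}=\chi(x)|x|^{-b}+(1-\chi(x))|x|^{-b}$ and treating $\chi(x)|x|^{-b}$ and $(1-\chi(x))|x|^{-b}$ as multipliers lying in suitable Lebesgue (or homogeneous Sobolev) spaces via Remark \ref{rem 3.1.}. For both \eqref{GrindEQ__3_1_} and \eqref{GrindEQ__3_2_} the strategy is the same: at each fixed time $t$, apply H\"older's inequality in the spatial variable to distribute the weight, the $\sigma$ copies of $u$, and the single factor $v$ across dual exponents, estimate $|u|^{\sigma}$ using $H^{s}_{q_i}\hookrightarrow L^{q}$-type embeddings (Corollary \ref{cor 2.3.}), and then apply H\"older in time to collapse the resulting time-integrals onto the admissible norms. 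The main work is to produce explicit admissible triples $(a_i,b_i)$, $(p_i,q_i)$, $(\bar p_i,\bar q_i)\in B$ for which all of these H\"older relations close simultaneously.

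Concretely, for the local piece \eqref{GrindEQ__3_1_} I would first fix the spatial H\"older balance
\begin{equation}\nonumber
\frac{1}{b_{1}'}=\frac{1}{\rho}+\frac{\sigma}{q_{1}}+\frac{1}{\bar q_{1}},
\end{equation}
where $\frac1\rho$ accounts for the weight $\chi(x)|x|^{-b}\in L^{\rho}$ locally (here one uses that $\chi(x)|x|^{-b}\in L^{\rho}$ precisely when $b<d/\rho$, i.e. $\rho<d/b$, which is available since $b<d$; alternatively one places the weight in $\dot H^{0}_{\rho}=L^\rho$ via Remark \ref{rem 3.1.} with $s=0$). I would then use $H^{s}_{q_1}\hookrightarrow L^{q}$ to bound $\||u|^{\sigma}\|_{L^{q/\sigma}}\lesssim \|u\|_{H^{s}_{q_1}}^{\sigma}$ with $q=\sigma q_1$, and finally balance in time by choosing the time exponents so that $\frac{1}{a_1'}=\frac{\sigma}{p_1}+\frac{1}{\bar p_1}$. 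The constraint $\sigma>\frac{8-2b}{d}$ guarantees that the time exponents land in the admissible range $p\in[2,\infty]$, while $\sigma<\sigma_{c}(s)$ (equivalently $s>s_c$) guarantees the spatial exponents $q_i$ stay below the Sobolev-embedding threshold so that $H^{s}_{q_i}\hookrightarrow L^{\sigma q_i}$ holds with the right scaling. The far-field piece \eqref{GrindEQ__3_2_} is handled identically, except that $(1-\chi)|x|^{-b}$ is bounded; one may simply take it in $L^{\infty}$ (or, to keep exponents flexible, in $L^{\tilde\rho}$ for large $\tilde\rho$ via Remark \ref{rem 3.1.} with $s=0$ and $b>d/\tilde\rho$), which typically gives more room than the singular local piece.

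The main obstacle I anticipate is the bookkeeping that forces all three chosen pairs into $B$ simultaneously: the biharmonic condition $\frac{4}{p}+\frac{d}{q}=\frac{d}{2}$ is one linear relation tying $p$ and $q$ together, so each of $(a_i,b_i),(p_i,q_i),(\bar p_i,\bar q_i)$ has only one free parameter, yet these must jointly satisfy the two H\"older identities (spatial and temporal) together with the weight integrability condition and the Sobolev-embedding scaling $\frac{d}{q}=\frac{d}{\sigma q_1}+\frac{s}{\ \cdot\ }$. Verifying that this overdetermined-looking system admits a solution in the full parameter regime $0<s<\min\{2+\frac d2,\frac32 d\}$ and $0<b<\min\{4,d,\frac32 d-s,\frac d2+2-s\}$ is where the inequalities $\sigma>\frac{8-2b}{d}$, $\sigma<\sigma_{c}(s)$, and the upper bounds on $b$ are each used, and I expect the delicate case to be when $s$ is near its upper endpoint or $b$ near $d$, where the spatial integrability of the weight and the Sobolev embedding compete most tightly. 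I would therefore isolate the construction of the exponents as the technical heart of the argument and verify the admissibility and H\"older constraints by direct computation, deferring the routine verification of each embedding to Corollary \ref{cor 2.3.}.
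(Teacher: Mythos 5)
Your overall strategy coincides with the paper's: split the weight with $\chi$, apply the spatial H\"older inequality placing $\chi(x)|x|^{-b}$ (resp. $(1-\chi(x))|x|^{-b}$) in a Lebesgue space via Remark \ref{rem 3.1.}, absorb the $\sigma$ factors of $u$ through a Sobolev embedding (Corollary \ref{cor 2.3.}), and close with H\"older in time via $\frac{1}{a_i'}=\frac{\sigma}{p_i}+\frac{1}{\bar p_i}$. However, there is a genuine gap in the execution. The spatial balance you fix for the local piece,
\begin{equation}\nonumber
\frac{1}{b_{1}'}=\frac{1}{\rho}+\frac{\sigma}{q_{1}}+\frac{1}{\bar q_{1}},
\end{equation}
places $u$ in $L^{q_{1}}$ itself and therefore uses no Sobolev gain. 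Combining this identity with the biharmonic admissibility relations $\frac{4}{a_1}+\frac{d}{b_1}=\frac{4}{p_1}+\frac{d}{q_1}=\frac{4}{\bar p_1}+\frac{d}{\bar q_1}=\frac{d}{2}$ and the time relation $\frac{1}{a_1'}=\frac{\sigma}{p_1}+\frac{1}{\bar p_1}$ forces $\frac{d}{\rho}=4-\frac{\sigma d}{2}$, and local integrability of the weight (which needs $\frac{d}{\rho}>b$) then requires $\sigma<\frac{8-2b}{d}$ --- the exact opposite of the hypothesis. The cure, and the point of the paper's system \eqref{GrindEQ__3_3_}, is to place $u$ in $L^{\alpha_1}$ with $\frac{1}{q_1}-\frac{s}{d}<\frac{1}{\alpha_1}<\frac{1}{q_1}$; the resulting gain of up to $\frac{\sigma s}{d}$ in the balance converts the obstruction into $\sigma<\frac{8-2b}{d-2s}=\sigma_c(s)$, which is exactly the subcriticality hypothesis. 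Your later sentence choosing $q=\sigma q_1$ contradicts the displayed balance and in any case fails when $\sigma<1$ (which the hypotheses permit), since $H^{s}_{q_1}\hookrightarrow L^{\sigma q_1}$ requires $\sigma q_1\ge q_1$; the exponent $\alpha_1$ must be left free inside the open window above.

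Beyond that, the existence of exponents satisfying all constraints simultaneously is not a routine afterthought to be ``verified by direct computation'': it is essentially the entire proof. The paper reduces matters to the explicit inequality systems \eqref{GrindEQ__3_9_}--\eqref{GrindEQ__3_16_} and checks solvability by a case analysis ($d>4$ versus $d\le 4$), which is where $b<\min\{4,d\}$ and $\sigma>\frac{8-2b}{d}$ are actually consumed; asserting that the hypotheses ``are each used'' without exhibiting a solution leaves the lemma unproved. A smaller point: for the tail you suggest taking $(1-\chi)|x|^{-b}\in L^{\infty}$, but Remark \ref{rem 3.1.} as stated only covers $1<r<\infty$, and the paper deliberately keeps a finite exponent with $0<\frac{1}{\gamma_2}<\frac{b}{d}$, since the openness of that interval is what makes the far-field system solvable with room to spare; your fallback of $L^{\tilde\rho}$ with $\tilde\rho$ large but finite is the right choice and is what the paper does.
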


\begin{proof}
For $i=1,2$, let $\alpha_{i}$, $(a_{i},b_{i})\in B$, $(p_{i},q_{i})\in B$ and $(\bar{p}_{i},\bar{q}_{i})\in B$ satisfy the following system:
\begin{equation} \label{GrindEQ__3_3_}
\left\{\begin{array}{ll}
{\max\left\{0,~\frac{1}{q_{i}}-\frac{s}{d}\right\}<\frac{1}{\alpha_{i}}<\frac{1}{q_{i}},}\\
{\frac{1}{a_{i}'}=\frac{\sigma}{p_{i}}+\frac{1}{\bar{p}_{i}}.}\\
\end{array}\right.
\end{equation}
Then, for $\alpha_{1}$, $(a_{1},b_{1})\in B$, $(p_{1},q_{1})\in B$ and $(\bar{p}_{1},\bar{q}_{1})\in B$ satisfying \eqref{GrindEQ__3_3_} and
\begin{equation} \label{GrindEQ__3_4_}
\frac{1}{b_{1}'}-\frac{\sigma}{\alpha_{1}}-\frac{1}{\bar{q}_{1}}>\frac{b}{d},
\end{equation}
we deduce from the H\"{o}lder inequality, Remark \ref{rem 3.1.} and Corollary \ref{cor 2.3.} that
\begin{eqnarray}\begin{split} \label{GrindEQ__3_5_}
\left\|\chi(x)|x|^{-b}|u|^{\sigma}v\right\|_{L^{b_{1}'}}&\lesssim
\left\|\chi(x)|x|^{-b}\right\| _{L^{\gamma_{1}}} \left\|u\right\|^{\sigma} _{L^{\alpha_{1}}}\left\|v\right\| _{L^{\bar{q}_{1}}}
\lesssim \left\|u\right\|^{\sigma}_{H_{q_{1}}^{s}}\left\|v\right\|_{L^{\bar{q}_{1}}},
\end{split}\end{eqnarray}
where $\frac{1}{\gamma_{1}}:=\frac{1}{b_{1}'}-\frac{\sigma}{\alpha_{1}}-\frac{1}{\bar{q_{1}}}$.
Using the second equation in \eqref{GrindEQ__3_3_}, \eqref{GrindEQ__3_5_} and the H\"{o}lder inequality, we immediately get \eqref{GrindEQ__3_1_}.
Using the similar argument, for $\alpha_{2}$, $(a_{2},b_{2})\in B$, $(p_{2},q_{2})\in B$ and $(\bar{p}_{2},\bar{q}_{2})\in B$ satisfying \eqref{GrindEQ__3_3_} and
\begin{equation} \label{GrindEQ__3_6_}
0<\frac{1}{b_{2}'}-\frac{\sigma}{\alpha_{2}}-\frac{1}{\bar{q}_{2}}<\frac{b}{d},
\end{equation}
we also have
\begin{eqnarray}\begin{split} \label{GrindEQ__3_7_}
\left\|(1-\chi(x))|x|^{-b}|u|^{\sigma}v\right\|_{L^{b_{2}'}}&\lesssim
\left\|(1-\chi(x))|x|^{-b}\right\| _{L^{\gamma_{2}}} \left\|u\right\|^{\sigma} _{L^{\alpha_{2}}}\left\|v\right\| _{L^{\bar{q}_{2}}}
\lesssim \left\|u\right\|^{\sigma}_{H_{q_{2}}^{s}}\left\|v\right\|_{L^{\bar{q}_{2}}},
\end{split}\end{eqnarray}
where $\frac{1}{\gamma_{2}}:=\frac{1}{b_{2}'}-\frac{\sigma}{\alpha_{2}}-\frac{1}{\bar{q_{2}}}$.
Using the second equation in \eqref{GrindEQ__3_3_}, \eqref{GrindEQ__3_7_} and the H\"{o}lder inequality, we immediately get \eqref{GrindEQ__3_2_}.
Hence, it suffices to prove that there exist $\alpha_{i}$, $(a_{i},b_{i})\in B$, $(p_{i},q_{i})\in B$ and $(\bar{p}_{i},\bar{q}_{i})\in B$ $(i=1,2)$ satisfying \eqref{GrindEQ__3_3_}, \eqref{GrindEQ__3_4_} and \eqref{GrindEQ__3_6_}.
In fact, the second equation in \eqref{GrindEQ__3_3_} implies that
\begin{equation} \label{GrindEQ__3_8_}
\frac{\sigma}{q_{i}}=\frac{\sigma+2}{2}-\frac{1}{\bar{q}_{i}}-\frac{4}{d}-\frac{1}{b_{i}}.
\end{equation}
We can easily see that $(p_{i},q_{i})\in B$ $(i=1,2)$ provided that
\begin{equation} \label{GrindEQ__3_9_}
1-\frac{4}{d}+\frac{1}{b_{i}}< \frac{1}{\bar{q}_{i}}<\min\left\{\frac{\sigma+2}{2}-\frac{4}{d}-\frac{1}{b_{i}},~1+\frac{2\sigma}{d}-\frac{4}{d}-\frac{1}{b_{i}}\right\}.
\end{equation}
In view of \eqref{GrindEQ__3_8_}, the first equation in \eqref{GrindEQ__3_3_} is equivalent to
\begin{equation} \label{GrindEQ__3_10_}
\max\left\{\frac{\sigma+2}{2}-\frac{1}{\bar{q}_{i}}-\frac{4}{d}-\frac{1}{b_{i}}-\frac{\sigma s}{d},~0\right\}
<\frac{\sigma}{\alpha_{i}}<\frac{\sigma+2}{2}-\frac{1}{\bar{q}_{i}}-\frac{4}{d}-\frac{1}{b_{i}}.
\end{equation}

{\bf Step 1.} In order to prove \eqref{GrindEQ__3_1_}, it suffices to prove that there exist $\alpha_{1}$, $(a_{1},b_{1})\in B$ and $(\bar{p}_{1},\bar{q}_{1})\in B$ satisfying \eqref{GrindEQ__3_4_}, \eqref{GrindEQ__3_9_} and \eqref{GrindEQ__3_10_}.
Since the equation \eqref{GrindEQ__3_4_} is equivalent to
\begin{equation} \nonumber
\frac{\sigma}{\alpha_{1}}<1-\frac{1}{b_{1}}-\frac{1}{\bar{q}_{1}}-\frac{b}{d},
\end{equation}
we can choose $\alpha_{1}$ satisfying \eqref{GrindEQ__3_4_} and \eqref{GrindEQ__3_11_}, provided that $\sigma<\sigma_{c}(s)$ and
\begin{equation} \label{GrindEQ__3_11_}
\frac{1}{\bar{q}_{1}}<\min\left\{1-\frac{1}{b_{1}}-\frac{b}{d},~\frac{\sigma+2}{2}-\frac{4}{d}-\frac{1}{b_{1}}\right\}.
\end{equation}
Hence, it suffices to prove that there exist $(a_{1},b_{1})\in B$ and $(\bar{p}_{1},\bar{q}_{1})\in B$ satisfying \eqref{GrindEQ__3_9_} and \eqref{GrindEQ__3_11_}.
We divide the study in two cases: $d>4$ and $d\le 4$.

If $d>4$, then it suffices to prove that there exist $(a_{1},b_{1})\in B$ and $\bar{q}_{1}$ satisfying
\begin{equation} \nonumber
\max\left\{\frac{d-4}{2d},~1-\frac{4}{d}-\frac{1}{b_{1}}\right\}<\frac{1}{\bar{q}_{1}}<\min\left\{1-\frac{1}{b_{1}}-\frac{b}{d},
~1+\frac{2\sigma}{d}-\frac{4}{d}-\frac{1}{b_{1}},~\frac{1}{2}\right\},
\end{equation}
which is possible provided that $b<4$ and
\begin{equation} \label{GrindEQ__3_12_}
\frac{1}{2}-\frac{4}{d}<\frac{1}{b_{1}}<\min\left\{\frac{1}{2}+\frac{2}{d}-\frac{b}{d},~\frac{1}{2}-\frac{2}{d}+\frac{2\sigma}{d}\right\}.
\end{equation}
And it is obvious that there exists $(a_{1},b_{1})\in B$ satisfying \eqref{GrindEQ__3_12_}.

If $d\le 4$, then it suffices to prove that there exist $(a_{1},b_{1})\in B$ and $\bar{q}_{1}$ satisfying
\begin{equation} \nonumber
0<\frac{1}{\bar{q}_{1}}<\min\left\{1-\frac{1}{b_{1}}-\frac{b}{d},
~1+\frac{\sigma}{2}-\frac{4}{d}-\frac{1}{b_{1}},~\frac{1}{2}\right\},
\end{equation}
which is possible provided that $b<4$ and
\begin{equation} \label{GrindEQ__3_13_}
\frac{1}{b_{1}}<\min\left\{1-\frac{b}{d},~1+\frac{\sigma}{2}-\frac{4}{d}\right\}.
\end{equation}
Using the fact that $b<d$ and $\sigma>\frac{8-2b}{d}$, we can easily see that there exists $(a_{1},b_{1})\in B$ satisfying \eqref{GrindEQ__3_13_}. This completes the proof of \eqref{GrindEQ__3_1_}.

{\bf Step 2.} In order to prove \eqref{GrindEQ__3_2_}, it suffices to prove that there exist $\alpha_{2}$, $(a_{2},b_{2})\in B$ and $(\bar{p}_{2},\bar{q}_{2})\in B$ satisfying \eqref{GrindEQ__3_6_}, \eqref{GrindEQ__3_9_} and \eqref{GrindEQ__3_10_}.
Noticing that the equation \eqref{GrindEQ__3_6_} is equivalent to
\begin{equation} \nonumber
1-\frac{1}{b_{2}}-\frac{1}{\bar{q}_{2}}-\frac{b}{d}<\frac{\sigma}{\alpha_{2}}<1-\frac{1}{b_{2}}-\frac{1}{\bar{q}_{2}},
\end{equation}
we can choose $\alpha_{2}$ satisfying \eqref{GrindEQ__3_6_} and \eqref{GrindEQ__3_10_}, provided that $\frac{8-2b}{d}<\sigma<\sigma_{c}(s)$ and
\begin{equation} \label{GrindEQ__3_14_}
\frac{1}{\bar{q}_{2}}<\min\left\{1-\frac{1}{b_{2}},~\frac{\sigma+2}{2}-\frac{4}{d}-\frac{1}{b_{2}}\right\}.
\end{equation}
Hence, it suffices to prove that there exist $(a_{2},b_{2})\in B$ and $(\bar{p}_{2},\bar{q}_{2})\in B$ satisfying \eqref{GrindEQ__3_9_} and \eqref{GrindEQ__3_14_}.
We divide the study in two cases: $d>4$ and $d\le 4$.

If $d>4$, then it suffices to prove that there exist $(a_{2},b_{2})\in B$ and $\bar{q}_{2}$ satisfying
\begin{equation} \nonumber
\max\left\{\frac{d-4}{2d},~1-\frac{4}{d}-\frac{1}{b_{2}}\right\}<\frac{1}{\bar{q}_{2}}<\min\left\{1-\frac{1}{b_{2}},
~1+\frac{2\sigma}{d}-\frac{4}{d}-\frac{1}{b_{2}},~\frac{1}{2}\right\},
\end{equation}
which is possible provided that
\begin{equation} \label{GrindEQ__3_15_}
\frac{1}{2}-\frac{4}{d}<\frac{1}{b_{2}}<\min\left\{\frac{1}{2}+\frac{2}{d},~\frac{1}{2}-\frac{2}{d}+\frac{2\sigma}{d}\right\}.
\end{equation}
And it is obvious that there exists $(a_{2},b_{2})\in B$ satisfying \eqref{GrindEQ__3_15_}.

If $d\le 4$, then it suffices to prove that there exist $(a_{1},b_{1})\in B$ and $\bar{q}_{1}$ satisfying
\begin{equation} \nonumber
0<\frac{1}{\bar{q}_{1}}<\min\left\{1-\frac{1}{b_{2}},
~1+\frac{\sigma}{2}-\frac{4}{d}-\frac{1}{b_{2}},~\frac{1}{2}\right\},
\end{equation}
which is possible provided that
\begin{equation} \label{GrindEQ__3_16_}
\frac{1}{b_{2}}<\min\left\{1,~1+\frac{\sigma}{2}-\frac{4}{d}\right\}.
\end{equation}
Using the fact that $\sigma>\frac{8-2b}{d}$, we can easily see that there exists $(a_{2},b_{2})\in B$ satisfying \eqref{GrindEQ__3_16_}. This completes the proof of \eqref{GrindEQ__3_2_}.
\end{proof}

\begin{lemma}\label{lem 3.3.}
$d\in \mathbb N$, $0<s <\min \{2+\frac{d}{2},\frac{3}{2}d\}$, $0<b<\min\{4,d,\frac{3}{2}d-s,\frac{d}{2}+2-s\}$  and $\frac{8-2b}{d}<\sigma<\sigma_{c}(s)$. If $\sigma$ is not an even integer, assume \eqref{GrindEQ__1_11_}. Then there exist $(a_{i},b_{i})\in A$ and $(p_{i_{j}},q_{i_{j}})\in B$ $(i=3,4,~j=1,2,3)$ such that
\begin{equation} \label{GrindEQ__3_17_}
\left\|\chi(x)|x|^{-b}|u|^{\sigma}u\right\|_{L^{a_{3}'}(\mathbb R,\dot{H}_{b_{3}'}^{s-\gamma_{a_{3}',b_{3}'}-4})}
\lesssim \max_{j\in\{1,2,3\}}\left\|u\right\|_{L^{p_{3_{j}}}(\mathbb R,H_{q_{3_{j}}}^{s})}^{\sigma+1},
\end{equation}
\begin{equation} \label{GrindEQ__3_18_}
\left\|(1-\chi(x))|x|^{-b}|u|^{\sigma}u\right\|_{L^{a_{4}'}(\mathbb R,\dot{H}_{b_{4}'}^{s-\gamma_{a_{4}',b_{4}'}-4})}
\lesssim \max_{j\in\{1,2,3\}}\left\|u\right\|_{L^{p_{4_{j}}}(\mathbb R,H_{q_{4_{j}}}^{s})}^{\sigma+1},
\end{equation}
where $\chi\in C_{0}^{\infty}(\mathbb R^{d})$ is given in Remark \ref{rem 3.1.}.
\end{lemma}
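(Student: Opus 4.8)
The plan is to establish \eqref{GrindEQ__3_17_} and \eqref{GrindEQ__3_18_} in parallel, following the scheme of Lemma \ref{lem 3.2.} but now distributing the fractional derivatives between the singular weight and the power nonlinearity. Write $w=\chi(x)|x|^{-b}$ for \eqref{GrindEQ__3_17_} and $w=(1-\chi(x))|x|^{-b}$ for \eqref{GrindEQ__3_18_}, and set $s_{i}':=s-\gamma_{a_{i}',b_{i}'}-4$ for the derivative order dictated by the Strichartz gain in Lemma \ref{lem 2.7.}. The first move is to choose the admissible pair $(a_{i},b_{i})\in A$ so that $0\le s_{i}'\le s$; this is exactly where the slack in the admissibility condition \eqref{GrindEQ__1_7_} is spent. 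For each fixed time I then apply the fractional product rule (Lemma \ref{lem 2.6.}) at regularity $s_{i}'$ to the product $w\,|u|^{\sigma}u$, obtaining
\begin{equation}\nonumber
\left\|w\,|u|^{\sigma}u\right\|_{\dot{H}_{b_{i}'}^{s_{i}'}}\lesssim \left\|w\right\|_{L^{\rho_{1}}}\left\||u|^{\sigma}u\right\|_{\dot{H}_{\mu_{1}}^{s_{i}'}}+\left\|w\right\|_{\dot{H}_{\rho_{2}}^{s_{i}'}}\left\||u|^{\sigma}u\right\|_{L^{\mu_{2}}},
\end{equation}
with $\frac{1}{b_{i}'}=\frac{1}{\rho_{1}}+\frac{1}{\mu_{1}}=\frac{1}{\rho_{2}}+\frac{1}{\mu_{2}}$.

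The first term carries the derivatives on the nonlinearity. By Corollary \ref{cor 2.5.} one has $\||u|^{\sigma}u\|_{\dot{H}_{\mu_{1}}^{s_{i}'}}\lesssim \|u\|_{L^{\theta_{1}}}^{\sigma}\|u\|_{\dot{H}_{\nu_{1}}^{s_{i}'}}$ with $\frac{1}{\mu_{1}}=\frac{\sigma}{\theta_{1}}+\frac{1}{\nu_{1}}$, while $\|w\|_{L^{\rho_{1}}}$ is finite by Remark \ref{rem 3.1.} (with derivative order $0$). Here I expect the regularity hypothesis \eqref{GrindEQ__1_11_} to enter in a sharper way than in the local theory: Corollary \ref{cor 2.5.} requires only $\sigma\ge\lceil s_{i}'\rceil-1$, and by choosing $s_{i}'\le\lceil s\rceil-1$ (which is still $\le s$) this is guaranteed by \eqref{GrindEQ__1_11_}, since $\lceil s_{i}'\rceil-1\le\lceil s\rceil-2\le\sigma$. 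This is precisely why the threshold $\tilde{\sigma}_{\star}$ in \eqref{GrindEQ__1_11_} may be taken one derivative below $\sigma_{\star}$ of \eqref{GrindEQ__1_10_}. The two factors $\|u\|_{L^{\theta_{1}}}$ and $\|u\|_{\dot{H}_{\nu_{1}}^{s_{i}'}}$ are then absorbed into $H_{q_{i_{1}}}^{s}$- and $H_{q_{i_{2}}}^{s}$-norms via the embeddings of Corollary \ref{cor 2.3.}, using $s\ge s_{i}'$ together with the appropriate scaling inequalities.

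The second term carries the derivatives on the weight. There $\||u|^{\sigma}u\|_{L^{\mu_{2}}}=\|u\|_{L^{(\sigma+1)\mu_{2}}}^{\sigma+1}$ is again dominated by an $H_{q_{i_{3}}}^{s}$-norm by Corollary \ref{cor 2.3.}, whereas the finiteness of $\|w\|_{\dot{H}_{\rho_{2}}^{s_{i}'}}$ is exactly the content of Remark \ref{rem 3.1.}, demanding $b+s_{i}'<\frac{d}{\rho_{2}}$ in the near-origin case \eqref{GrindEQ__3_17_} and $b+s_{i}'>\frac{d}{\rho_{2}}$ in the far-origin case \eqref{GrindEQ__3_18_}. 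Collecting the three distinct spatial norms of $u$ produced above yields the $\max_{j\in\{1,2,3\}}$ on the right-hand sides, and a final application of the H\"older inequality in time — matching $\frac{1}{a_{i}'}$ with the biharmonic-admissible time exponents of the pairs $(p_{i_{j}},q_{i_{j}})\in B$ — gives \eqref{GrindEQ__3_17_} and \eqref{GrindEQ__3_18_}.

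The main obstacle, exactly as in Lemma \ref{lem 3.2.}, will be the final bookkeeping: verifying that the system of constraints on $\rho_{1},\mu_{1},\theta_{1},\nu_{1},\rho_{2},\mu_{2}$ and on the pairs $(a_{i},b_{i})\in A$, $(p_{i_{j}},q_{i_{j}})\in B$ is simultaneously solvable under the hypotheses. The delicate points are keeping $s_{i}'\in[0,\min\{s,\lceil s\rceil-1\}]$ so that both Corollary \ref{cor 2.5.} and the Sobolev embeddings apply, placing $b+s_{i}'$ on the correct side of $\frac{d}{\rho_{2}}$ so that Remark \ref{rem 3.1.} is available, and meeting all scaling and H\"older identities at once. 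I expect the upper bounds $b<\frac{3}{2}d-s$ and $b<\frac{d}{2}+2-s$ to be precisely the conditions that open a nonempty range for $\frac{1}{\rho_{2}}$ in the near-origin and far-origin regimes, while $\frac{8-2b}{d}<\sigma<\sigma_{c}(s)$ plays the same role as before in producing admissible $(a_{i},b_{i})$. As in Steps 1 and 2 of the preceding proof, I would split into the cases $d>4$ and $d\le 4$ and exhibit explicit admissible intervals for the free parameters in each.
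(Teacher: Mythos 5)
Your outline follows the paper's proof exactly: split the derivative order $s-\gamma_{a_i',b_i'}-4$ between the weight and the nonlinearity via the fractional product rule (Lemma \ref{lem 2.6.}), estimate the nonlinear factor with Corollary \ref{cor 2.5.}, control the two norms of the weight with Remark \ref{rem 3.1.}, return to $H^{s}_{q}$-norms by the embeddings of Corollary \ref{cor 2.3.}, and close with H\"older in time using $\frac{1}{a_i'}=\frac{\sigma}{p_{i_1}}+\frac{1}{p_{i_2}}=\frac{\sigma+1}{p_{i_3}}$. This is precisely the structure of the system \eqref{GrindEQ__3_19_}--\eqref{GrindEQ__3_20_}. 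However, you have deferred the part that constitutes essentially all of the paper's proof, namely the verification that the resulting constraints on the exponents are simultaneously solvable with $(a_i,b_i)\in A$ and $(p_{i_j},q_{i_j})\in B$. That verification is not routine: it is where the hypotheses $b<\frac{3}{2}d-s$, $b<\frac{d}{2}+2-s$ and $\frac{8-2b}{d}<\sigma<\sigma_c(s)$ are actually consumed (see \eqref{GrindEQ__3_44_} and \eqref{GrindEQ__3_58_}), and the case distinction the paper needs here is $s\ge 1$ versus $0<s<1$ --- choosing $(a_i,b_i)$ Schr\"odinger-admissible with $b_i=\frac{2d}{d-2}$ for $d\ge 3$ and $\frac{1}{b_i}=0^{+}$ for $d=1,2$ --- not the $d>4$ versus $d\le 4$ split you propose to recycle from Lemma \ref{lem 3.2.}.

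There is also a concrete error in the one quantitative claim you do make. You assert that one can always choose the admissible pair so that $s_i':=s-\gamma_{a_i',b_i'}-4\le\lceil s\rceil-1$, whence Corollary \ref{cor 2.5.} would only ever require $\sigma\ge\lceil s\rceil-2$. But the derivative gain is $\gamma_{a_i',b_i'}+4=\frac{4}{a_i}+\frac{d}{b_i}-\frac{d}{2}\le\frac{2}{a_i}$, which the admissibility condition \eqref{GrindEQ__1_7_} (with $b_i<\infty$) caps strictly below $\min\{1,\frac{d}{2}\}$; for $d=1,2$ one therefore has $s_i'>s-\frac{d}{2}$, and for instance $d=2$, $s=2$ forces $s_i'>1=\lceil s\rceil-1$, so the condition Corollary \ref{cor 2.5.} imposes is $\sigma\ge 1>\lceil s\rceil-2=0$. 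The correct requirement is \eqref{GrindEQ__3_27_}, which with the optimal choices becomes $\sigma\ge\lceil s\rceil-2$ only for $d\ge 3$ and becomes $\sigma\ge\left[s-\frac{d}{2}\right]$ for $d=1,2$ --- exactly the dichotomy built into \eqref{GrindEQ__1_11_}. So your heuristic for why \eqref{GrindEQ__1_11_} suffices is right in spirit (the Strichartz gain of derivatives lowers the regularity threshold by roughly one) but wrong as stated in low dimensions, and the gap between your sketch and a complete proof is the entire exponent analysis of \eqref{GrindEQ__3_28_}--\eqref{GrindEQ__3_59_}.
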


\begin{proof}
Let $(a_{i},b_{i})\in A$ and $(p_{i_{j}},q_{i_{j}})\in B$ ($i=3,4,~ j=1,2,3)$ satisfy the following system:
\begin{equation} \label{GrindEQ__3_19_}
\left\{\begin{array}{lc}
{s-\gamma_{a_{i}',b_{i}'}-4\ge0},~&~~~~~~~~{(1)}\\
{\gamma_{a_{i}',b_{i}'}+4\ge 0,}~&~~~~~~~~{(2)}\\
{\frac{1}{\gamma_{i}}=\frac{1}{b_{i}'}-\sigma\left(\frac{1}{q_{i_{1}}}-\frac{s_{i}}{d}\right)-\frac{1}{\beta_{i}}},~&~~~~~~~~{(3)}\\
{\frac{1}{q_{i_{1}}}-\frac{s_{i}}{d}>0},~&~~~~~~~~{(4)}\\
{0<s_{i}<s},~&~~~~~~~~{(5)}\\
{\max\left\{\frac{1}{q_{i_{2}}}-\frac{\gamma_{a_{i}',b_{i}'}+4}{d},~0\right\}< \frac{1}{\beta_{i}}<\frac{1}{q_{i_{2}}}},~&~~~~~~~~{(6)}\\
{\frac{1}{\bar{\gamma}_{i}}=\frac{1}{b_{i}'}-(\sigma+1)\left(\frac{1}{q_{i_{3}}}-\frac{t_{i}}{d}\right)},~&~~~~~~~~{(7)}\\
{\frac{1}{q_{i_{3}}}-\frac{t_{i}}{d}>0},~&~~~~~~~~{(8)}\\
{0<t_{i}<s},~&~~~~~~~~{(9)}\\
{\frac{1}{a_{i}'}=\frac{\sigma}{p_{i_{1}}}+\frac{1}{p_{i_{2}}}=\frac{\sigma+1}{p_{i_{3}}}},~&~~~~~~~~{(10)}\\
\end{array}\right.
\end{equation}
where
\begin{equation} \label{GrindEQ__3_20_}
\left\{\begin{array}{l}
{\frac{1}{\gamma_{3}}>\frac{b}{d}},~\frac{1}{\bar{\gamma}_{3}}>\frac{b+s-\gamma_{a_{i}',b_{i}'}-4}{d},\\
{0<\frac{1}{\gamma_{4}}<\frac{b}{d}},~0<\frac{1}{\bar{\gamma}_{4}}<\frac{b+s-\gamma_{a_{i}',b_{i}'}-4}{d}.\\
\end{array}\right.
\end{equation}
Using Lemma \ref{lem 2.1.}, Lemma \ref{lem 2.2.} and Corollary \ref{cor 2.3.}, it follows from the equations (2), (4), (5), (6), (8) and (9) in \eqref{GrindEQ__3_19_} that
\begin{equation} \label{GrindEQ__3_21_}
H_{q_{i_{1}} }^{s}\hookrightarrow L^{\alpha_{1}},~H_{q_{i_{2}} }^{s}\hookrightarrow\dot{H}_{\beta_{i} }^{s-\gamma_{a_{i}',b_{i}'}-4}, ~H_{q_{i_{3}} }^{s}\hookrightarrow L^{\bar{\alpha}_{i}},
\end{equation}
where
\begin{equation} \label{GrindEQ__3_22_}
\frac{1}{\alpha_{i}}:=\frac{1}{q_{i_{1}}}-\frac{s_{i}}{d}, ~
\frac{1}{\bar{\alpha}_{i}}:=\frac{1}{q_{i_{3}}}-\frac{t_{i}}{d}.
\end{equation}
Furthermore, using Remark \ref{rem 3.1.} and \eqref{GrindEQ__3_20_}, we have
\begin{equation} \label{GrindEQ__3_23_}
\chi(x)|x|^{-b} \in L^{\gamma _{3}}, ~ \chi(x)|x|^{-b}\in  \dot{H}_{\bar{\gamma}_{3} }^{s-\gamma_{a_{3}',b_{3}'}-4},
\end{equation}
\begin{equation} \label{GrindEQ__3_24_}
(1-\chi(x))|x|^{-b} \in L^{\gamma _{4}}, ~ (1-\chi(x))|x|^{-b}\in  \dot{H}_{\bar{\gamma}_{4} }^{s-\gamma_{a_{4}',b_{4}'}-4}.
\end{equation}
Putting $\frac{1}{r_{i}}:=\frac{1}{b_{i}'}-\frac{1}{\gamma_{i}},~\frac{1}{\bar{r}_{i}}:=\frac{1}{b_{i}'}-\frac{1}{\bar{\gamma}_{i}}$, and using the equations (1), (3), (7) in \eqref{GrindEQ__3_19_}, \eqref{GrindEQ__3_21_}--\eqref{GrindEQ__3_24_}, Lemma \ref{lem 2.6.} and Corollary \ref{cor 2.5.}, we have
\begin{eqnarray}\begin{split} \label{GrindEQ__3_25_}
\left\| \chi(x)|x|^{-b} |u|^{\sigma}u\right\| _{\dot{H}_{b_{3}'}^{s-\gamma_{a_{3}',b_{3}'}-4}}
&\lesssim \left\| \chi(x)|x|^{-b} \right\| _{L^{\gamma _{3}}} \left\| |u|^{\sigma}u\right\| _{\dot{H}_{r_{3} }^{s-\gamma_{a_{3}',b_{3}'}-4} }\\
&~+\left\| \chi(x)|x|^{-b} \right\| _{\dot{H}_{\bar{\gamma}_{3} }^{s-\gamma_{a_{3}',b_{3}'}-4} } \left\| |u|^{\sigma}u\right\| _{L^{\bar{r}_{3} } }\\
&\lesssim \left\| u\right\|^{\sigma}_{L^{\alpha_{3}}}\left\|u\right\| _{\dot{H}_{\beta_{3} }^{s-\gamma_{a_{3}',b_{3}'}-4} }
+\left\| u\right\|^{\sigma+1} _{L^{\bar{\alpha}_{3}}}\\
&\lesssim \left\|u\right\|^{\sigma}_{H_{q_{3_{1}} }^{s}}\left\|u\right\|_{H_{q_{3_{2}} }^{s}}+\left\|u\right\|^{\sigma+1}_{H_{q_{3_{3}} }^{s}},
\end{split}\end{eqnarray}
and
\begin{eqnarray}\begin{split} \label{GrindEQ__3_26_}
\left\| (1-\chi(x))|x|^{-b} |u|^{\sigma}u\right\| _{\dot{H}_{b_{4}'}^{s-\gamma_{a_{4}',b_{4}'}-4}}
&\lesssim \left\| (1-\chi(x))|x|^{-b} \right\| _{L^{\gamma _{4}}} \left\| |u|^{\sigma}u\right\| _{\dot{H}_{r_{4} }^{s-\gamma_{a_{4}',b_{4}'}-4} }\\
&~+\left\| (1-\chi(x))|x|^{-b} \right\| _{\dot{H}_{\bar{\gamma}_{4} }^{s-\gamma_{a_{4}',b_{4}'}-4} } \left\| |u|^{\sigma}u\right\| _{L^{\bar{r}_{4} } }\\
&\lesssim \left\| u\right\|^{\sigma}_{L^{\alpha_{4}}}\left\|u\right\| _{\dot{H}_{\beta_{4} }^{s-\gamma_{a_{4}',b_{4}'}-4} }
          +\left\| u\right\|^{\sigma+1} _{L^{\bar{\alpha}_{4}}}\\
&\lesssim \left\|u\right\|^{\sigma}_{H_{q_{4_{1}} }^{s}}\left\|u\right\|_{H_{q_{4_{2}} }^{s}}+\left\|u\right\|^{\sigma+1}_{H_{q_{4_{3}} }^{s}},
\end{split}\end{eqnarray}
where we need the assumption that either $\sigma$ is an even integer, or
\begin{equation} \label{GrindEQ__3_27_}
\sigma\ge \lceil{ s-\gamma_{a_{i}',b_{i}'}-4}\rceil-1 ~(i=3,4).
\end{equation}
Using the H\"{o}lder inequality, \eqref{GrindEQ__3_25_}, \eqref{GrindEQ__3_26_} and equation (10) in the system \eqref{GrindEQ__3_19_}, we immediately get \eqref{GrindEQ__3_1_} and \eqref{GrindEQ__3_2_}.
Therefore, it remains to choose $(a_{i},b_{i})\in A$ and $(p_{i_{j}},q_{i_{j}})\in B$ $(i=3,4,~j=1,2,3)$ satisfying \eqref{GrindEQ__3_19_} and \eqref{GrindEQ__3_20_}.
In fact, due to \eqref{GrindEQ__1_10_}, we can see that
\begin{equation} \label{GrindEQ__3_28_}
\left\{\begin{array}{l}
{s-\gamma_{a_{i}',b_{i}'}-4\ge 0 \Leftrightarrow \frac{4}{a_{i}d}\le \frac{s}{d}+\frac{1}{2}-\frac{1}{b_{i}},}\\
{\gamma_{a_{i}',b_{i}'}+4\ge 0 \Leftrightarrow \frac{4}{a_{i}d}\ge \frac{1}{2}-\frac{1}{b_{i}}.}
\end{array}\right.
\end{equation}
The equation (10) in \eqref{GrindEQ__3_19_} implies that
\begin{equation} \label{GrindEQ__3_29_}
\frac{\sigma}{q_{i_{1}}}+\frac{1}{q_{i_{2}}}=\frac{\sigma+1}{q_{i_{3}}}=\frac{\sigma+1}{2}-\frac{4}{d}+\frac{4}{a_{i}d}.
\end{equation}
In view of \eqref{GrindEQ__3_29_}, we can easily see that $(p_{i_{1}},q_{i_{1}})\in B$ satisfies (4) in \eqref{GrindEQ__3_19_}, provided that
\begin{equation} \label{GrindEQ__3_30_}
\frac{1}{2}-\frac{4}{d}+\frac{4}{a_{i}d}\le \frac{1}{q_{i_{2}}}<\min\left\{\frac{\sigma+1}{2}-\frac{4}{d}+\frac{4}{a_{i}d}-\frac{\sigma s_{i}}{d},~\frac{\sigma+1}{2}-\frac{4}{d}+\frac{4}{a_{i}d}-\frac{\sigma(d-4)}{2d}\right\}.
\end{equation}
We can also see that $(p_{i_{3}},q_{i_{3}})\in B$ satisfies (8) in \eqref{GrindEQ__3_19_}, provided that
\begin{equation} \label{GrindEQ__3_31_}
\frac{4}{a_{i}d}>\max\left\{\frac{4}{d}-\frac{\sigma+1}{2},~\frac{2-2\sigma}{d}\right\},
\end{equation}
and
\begin{equation} \label{GrindEQ__3_32_}
\frac{(\sigma+1)t_{i}}{d}<\frac{\sigma+1}{2}-\frac{4}{d}+\frac{4}{a_{i}d}.
\end{equation}

{\bf Step 1.} In order to prove \eqref{GrindEQ__3_17_}, it suffices to choose $(a_{3},b_{3})\in A$ and $(p_{3_{j}},q_{3_{j}})\in B$ $(j=1,2,3)$ satisfying \eqref{GrindEQ__3_28_}--\eqref{GrindEQ__3_32_}, the equations (3), (5)--(7), (9) in the system \eqref{GrindEQ__3_19_} and the first equation in \eqref{GrindEQ__3_20_}.
In view of \eqref{GrindEQ__3_29_}, the equations (3), (6) in \eqref{GrindEQ__3_19_} and the first equation in \eqref{GrindEQ__3_20_}, we have
\begin{equation} \nonumber
\max\left\{0,~\frac{1}{2}+\frac{1}{q_{3_2}}-\frac{1}{b_{3}}-\frac{4}{a_{3}d}\right\}<\frac{1}{\beta_{3}}<\min\left\{\frac{1}{q_{3_2}},
~\frac{1-\sigma}{2}+\frac{4-b}{d}+\frac{\sigma s_{3}}{d}+\frac{1}{q_{3_2}}-\frac{1}{b_{3}}-\frac{4}{a_{3}d}\right\},
\end{equation}
which is possible provided that
\begin{equation} \label{GrindEQ__3_33_}
\frac{1}{q_{3_2}}>\frac{\sigma-1}{2}-\frac{4-b}{d}-\frac{\sigma s_{3}}{d}+\frac{1}{b_{3}}+\frac{4}{a_{3}d},
\end{equation}
\begin{equation} \label{GrindEQ__3_34_}
\frac{\sigma s_{3}}{d}>\frac{\sigma}{2}-\frac{4-b}{d},
\end{equation}
and
\begin{equation} \label{GrindEQ__3_35_}
\frac{4}{a_{3}d}>\frac{1}{2}-\frac{1}{b_{3}}.
\end{equation}
We can easily check that there exists $(p_{3_2},q_{3_2})\in B$ satisfying \eqref{GrindEQ__3_30_} and \eqref{GrindEQ__3_33_}, provided that
\begin{equation} \label{GrindEQ__3_36_}
\frac{\sigma s_{3}}{d}>\max\left\{\frac{\sigma}{2}-\frac{d+4-b}{d}+\frac{1}{b_{3}}+\frac{4}{a_{3}d},~\frac{\sigma(d-4)}{2d}-1+\frac{1}{b_{3}}+\frac{b}{d}\right\},
\end{equation}
\begin{equation} \label{GrindEQ__3_37_}
\frac{\sigma s_{3}}{d}<\min\left\{\frac{\sigma}{2}-\frac{2}{d}+\frac{4}{a_{3}d},~\frac{\sigma+1}{2}-\frac{4}{d}+\frac{4}{a_{3}d}\right\},
\end{equation}
\begin{equation} \label{GrindEQ__3_38_}
\frac{4}{a_{3}d}>\max\left\{-\frac{1}{2}-\frac{2\sigma}{d}+\frac{4}{d},~\frac{2-2\sigma}{d}\right\},
\end{equation}
and
\begin{equation} \label{GrindEQ__3_39_}
\frac{1}{b_{3}}<1-\frac{b}{d}.
\end{equation}
Using the fact $\frac{8-2b}{d}<\sigma<\sigma_{c}(s)$, we can easily verify that there exists $s_{3}$ satisfying \eqref{GrindEQ__3_34_}, \eqref{GrindEQ__3_36_} and \eqref{GrindEQ__3_37_} and the equation (5) in \eqref{GrindEQ__3_19_}, provided that $(a_{3},b_{3})$ satisfies \eqref{GrindEQ__3_35_}, \eqref{GrindEQ__3_38_}, \eqref{GrindEQ__3_39_} and
\begin{equation} \label{GrindEQ__3_40_}
\frac{b-2}{d}<\frac{4}{a_{3}d}<1-\frac{1}{b_{3}}+\frac{8-2b-\sigma(d-2s)}{2d},
\end{equation}
\begin{equation} \label{GrindEQ__3_41_}
\frac{1}{b_{3}}<\frac{\sigma(2s+4-d)}{2d}+\frac{d-b}{d}.
\end{equation}
On the other hand, in view of (7) in the system \eqref{GrindEQ__3_19_}, \eqref{GrindEQ__3_20_} and \eqref{GrindEQ__3_29_}, we have
\begin{equation} \label{GrindEQ__3_42_}
\frac{(\sigma+1)t_{3}}{d}>\frac{\sigma}{2}+\frac{b+s-4}{d}.
\end{equation}
One can easily check that there exist $t_{3}$ satisfying the equation (9) in the system \eqref{GrindEQ__3_19_}, \eqref{GrindEQ__3_32_} and \eqref{GrindEQ__3_42_}, provided that $\frac{8-2b}{d}<\sigma<\sigma_{c}(s)$ and
\begin{equation} \label{GrindEQ__3_43_}
\frac{4}{a_{3}d}>\frac{b+s}{d}-\frac{1}{2}.
\end{equation}
In view of \eqref{GrindEQ__1_7_}, \eqref{GrindEQ__3_28_}, \eqref{GrindEQ__3_31_}, \eqref{GrindEQ__3_35_}, \eqref{GrindEQ__3_38_}--\eqref{GrindEQ__3_41_} and \eqref{GrindEQ__3_43_}, it suffices to choose $(a_{3},b_{3})$ satisfying
\begin{equation} \label{GrindEQ__3_44_}
\left\{\begin{array}{l}
{\max\left\{\frac{2-2\sigma}{d},~\frac{1}{2}-\frac{1}{b_{3}},~-\frac{1}{2}-\frac{2\sigma}{d}+\frac{4}{d},~\frac{b-2}{d},~\frac{b+s}{d}-\frac{1}{2}\right\}
<\frac{4}{a_{3}d}}<1-\frac{1}{b_{3}}+\frac{8-2b-\sigma(d-2s)}{2d},\\
{\frac{4}{a_{3}d}\le \min\left\{\frac{s}{d}+\frac{1}{2}-\frac{1}{b_{3}},~\frac{2}{d},~1-\frac{2}{b_{3}}\right\},}\\
{0<\frac{1}{b_{3}}<\min\left\{\frac{1}{2},~1-\frac{b}{d},~\frac{\sigma(2s+4-d)}{2d}+\frac{d-b}{d}\right\}.}\\
\end{array}\right.
\end{equation}

First, we consider the case $s\ge 1$. We will choose $(a_{3},b_{3})\in S$ satisfying \eqref{GrindEQ__3_44_}.
Since $\frac{4}{a_{3}d}=1-\frac{2}{b_{3}}$, $s\ge 1$ and $\sigma<\sigma_{c}(s)$, it suffices to choose $b_{3}$ satisfying the third equation in \eqref{GrindEQ__3_44_} and
\begin{equation} \label{GrindEQ__3_45_}
\frac{1}{2}-\frac{1}{d}\le \frac{1}{b_{3}}<\min\left\{\frac{3}{4}+\frac{\sigma}{d}-\frac{2}{d},~\frac{1}{2}-\frac{1-\sigma}{d},~\frac{1}{2}-\frac{b-2}{2d},~\frac{3}{4}-\frac{b+s}{2d}
\right\}.
\end{equation}
If $d\ge 3$, then we can easily see that $b_{3}:=\frac{2d}{d-2}$ satisfies the third equation in \eqref{GrindEQ__3_44_} and \eqref{GrindEQ__3_45_}, due to the fact that $\sigma>\frac{8-2b}{d}$ and $b<\min\left\{4,~\frac{d}{2}+2-s\right\}$. And the assumption \eqref{GrindEQ__3_27_} becomes
\begin{equation} \label{GrindEQ__3_46_}
\sigma\ge \lceil{s}\rceil-2.
\end{equation}
Similarly, if $d=1,2$, then we can easily verify that
$\frac{1}{b_{3}}:=0^{+}$ satisfies the third equation in \eqref{GrindEQ__3_44_} and \eqref{GrindEQ__3_45_}, using the fact that $\sigma>\frac{8-2b}{d}$ and $b<\frac{3}{2}d-s$.
And the assumption \eqref{GrindEQ__3_27_} becomes
\begin{equation} \label{GrindEQ__3_47_}
\sigma\ge \left[s-\frac{d}{2}\right].
\end{equation}

Next, we consider the case $0<s<1$. Using the fact that $b<\min\left\{4,~\frac{d}{2}+2-s,~\frac{3}{2}d-s\right\}$ and $\frac{8-2b}{d}<\sigma<\sigma_{c}(s)$, we can easily prove the existence of $(a_{3},b_{3})$ satisfying \eqref{GrindEQ__3_44_}, whose proof will be omitted.
And the assumption \eqref{GrindEQ__3_27_} becomes trivial, since $s<1$.

{\bf Step 2.} In order to prove \eqref{GrindEQ__3_19_}, it suffices to choose $(a_{4},b_{4})\in A$ and $(p_{4_{j}},q_{4_{j}})\in B$ $(j=1,2,3)$ satisfying \eqref{GrindEQ__3_28_}--\eqref{GrindEQ__3_32_}, the equations (3), (5)--(7), (9) in the system \eqref{GrindEQ__3_19_} and the second equation in \eqref{GrindEQ__3_20_}.
We can deduce from \eqref{GrindEQ__3_29_}, the equations (3), (6) in \eqref{GrindEQ__3_19_} and the second equation in \eqref{GrindEQ__3_20_} that
\begin{equation} \nonumber
\left\{\begin{array}{l}
{\max\left\{0,~\frac{1}{2}+\frac{1}{q_{4_2}}-\frac{1}{b_{4}}-\frac{4}{a_{4}d},~\frac{1-\sigma}{2}+\frac{4-b}{d}+\frac{\sigma s_{4}}{d}+\frac{1}{q_{4_2}}-\frac{1}{b_{4}}-\frac{4}{a_{4}d}\right\}<\frac{1}{\beta_{4}}},\\
{\frac{1}{\beta_{4}}<\min\left\{\frac{1}{q_{4_2}},
~\frac{1-\sigma}{2}+\frac{4}{d}+\frac{\sigma s_{4}}{d}+\frac{1}{q_{4_2}}-\frac{1}{b_{4}}-\frac{4}{a_{4}d}\right\},}
\end{array}\right.
\end{equation}
which is possible provided that
\begin{equation} \label{GrindEQ__3_48_}
\frac{1}{q_{4_2}}>\frac{\sigma-1}{2}-\frac{4}{d}-\frac{\sigma s_{4}}{d}+\frac{1}{b_{4}}+\frac{4}{a_{4}d},
\end{equation}
\begin{equation} \label{GrindEQ__3_49_}
\frac{\sigma}{2}-\frac{4}{d}<\frac{\sigma s_{4}}{d}<\frac{\sigma-1}{2}-\frac{4-b}{d}+\frac{1}{b_{4}}+\frac{4}{a_{4}d},
\end{equation}
and
\begin{equation} \label{GrindEQ__3_50_}
\frac{4}{a_{4}d}>\frac{1}{2}-\frac{1}{b_{4}}.
\end{equation}
We can easily check that there exists $(p_{4_2},q_{4_2})\in B$ satisfying \eqref{GrindEQ__3_30_} and \eqref{GrindEQ__3_48_}, provided that
\begin{equation} \label{GrindEQ__3_51_}
\frac{\sigma s_{4}}{d}>\max\left\{\frac{\sigma}{2}-\frac{d+4}{d}+\frac{1}{b_{4}}+\frac{4}{a_{4}d},~\frac{\sigma(d-4)}{2d}-1+\frac{1}{b_{4}}\right\},
\end{equation}
\begin{equation} \label{GrindEQ__3_52_}
\frac{\sigma s_{4}}{d}<\min\left\{\frac{\sigma}{2}-\frac{2}{d}+\frac{4}{a_{4}d},~\frac{\sigma+1}{2}-\frac{4}{d}+\frac{4}{a_{4}d}\right\},
\end{equation}
and
\begin{equation} \label{GrindEQ__3_53_}
\frac{4}{a_{4}d}>\max\left\{-\frac{1}{2}-\frac{2\sigma}{d}+\frac{4}{d},~\frac{2-2\sigma}{d}\right\}.
\end{equation}
Using the fact $\frac{8-2b}{d}<\sigma<\sigma_{c}(s)$, we can easily verify that there exists $s_{4}$ satisfying \eqref{GrindEQ__3_49_}, \eqref{GrindEQ__3_51_} and \eqref{GrindEQ__3_52_} and the equation (5) in \eqref{GrindEQ__3_19_}, provided that $(a_{4},b_{4})$ satisfies \eqref{GrindEQ__3_31_}, \eqref{GrindEQ__3_50_}, \eqref{GrindEQ__3_53_} and
\begin{equation} \label{GrindEQ__3_54_}
\frac{2}{d}-\frac{\sigma}{2}<\frac{4}{a_{4}d}<1+\frac{4}{d}-\frac{1}{b_{4}}-\frac{\sigma(d-2s)}{2d},
\end{equation}
\begin{equation} \label{GrindEQ__3_55_}
\frac{1}{b_{4}}<1+\frac{\sigma(2s+4-d)}{2d}.
\end{equation}
On the other hand, in view of (7) in the system \eqref{GrindEQ__3_19_}, \eqref{GrindEQ__3_20_} and \eqref{GrindEQ__3_29_}, we have
\begin{equation} \label{GrindEQ__3_56_}
\frac{\sigma-1}{2}-\frac{4}{d}+\frac{4}{a_{4}d}+\frac{1}{b_{4}}<\frac{(\sigma+1)t_{4}}{d}<\frac{\sigma}{2}+\frac{b+s-4}{d}.
\end{equation}
We can easily see that there exist $t_{4}$ satisfying the equation (9) in the system \eqref{GrindEQ__3_19_}, \eqref{GrindEQ__3_32_} and \eqref{GrindEQ__3_56_}, provided that $\frac{8-2b}{d}<\sigma<\sigma_{c}(s)$ and $(a_{4}, b_{4})$ satisfies \eqref{GrindEQ__3_31_} and
\begin{equation} \label{GrindEQ__3_57_}
\frac{4}{a_{4}d}<\min\left\{\frac{(\sigma+1)s}{d}+\frac{4}{d}-\frac{\sigma-1}{2}-\frac{1}{b_{4}},~\frac{b+s}{d}+\frac{1}{2}-\frac{1}{b_{4}}\right\}.
\end{equation}
In view of \eqref{GrindEQ__1_7_}, \eqref{GrindEQ__3_28_}, \eqref{GrindEQ__3_31_}, \eqref{GrindEQ__3_50_}, \eqref{GrindEQ__3_53_}--\eqref{GrindEQ__3_55_} and \eqref{GrindEQ__3_57_}, it suffices to choose $(a_{4},b_{4})$ satisfying
\begin{equation} \label{GrindEQ__3_58_}
\left\{\begin{array}{l}
{\frac{4}{a_{4}d}}>\max\left\{\frac{2-2\sigma}{d},~\frac{1}{2}-\frac{1}{b_{4}},~-\frac{1}{2}-\frac{2\sigma}{d}+\frac{4}{d},~\frac{4}{d}-\frac{\sigma+1}{2},
~\frac{2}{d}-\frac{\sigma}{2}\right\},\\
{\frac{4}{a_{4}d}<\min\left\{1+\frac{4}{d}-\frac{1}{b_{4}}-\frac{\sigma(d-2s)}{2d},
~\frac{1}{2}+\frac{4}{d}-\frac{1}{b_{4}}-\frac{\sigma}{2}+\frac{(\sigma+1)s}{d}\right\}},\\
{\frac{4}{a_{4}d}\le \min\left\{\frac{s}{d}+\frac{1}{2}-\frac{1}{b_{4}},~\frac{2}{d},~1-\frac{2}{b_{4}}\right\},}\\
{0<\frac{1}{b_{4}}<\min\left\{\frac{1}{2},~1+\frac{\sigma(2s+4-d)}{2d}\right\}.}\\
\end{array}\right.
\end{equation}
We divide the study in two cases: $s\ge 1$ and $0<s<1$.

First, we consider the case $s\ge 1$. We will choose $(a_{4},b_{4})\in S$ satisfying \eqref{GrindEQ__3_58_}.
Since $\frac{4}{a_{4}d}=1-\frac{2}{b_{4}}$, $s\ge 1$ and $\sigma<\sigma_{c}(s)$, it suffices to choose $b_{4}$ satisfying the fourth equation in \eqref{GrindEQ__3_58_} and
\begin{equation} \label{GrindEQ__3_59_}
\left\{\begin{array}{l}
{\frac{1}{2}-\frac{1}{d}\le \frac{1}{b_{4}}<\min\left\{\frac{1}{2}-\frac{1}{d}+\frac{1}{\max\{4,d\}},~\frac{3}{4}-\frac{2}{d}+\frac{\sigma}{\max\{4,d\}}\right\},}\\
{\frac{1}{b_{4}}>\frac{(d-2s)\sigma}{2d}-\frac{4}{d}+\frac{1}{2}-\frac{s}{d}.}\\
\end{array}\right.
\end{equation}
If $d\ge 3$, then we can easily see that $b_{4}:=\frac{2d}{d-2}$ satisfies the fourth equation in \eqref{GrindEQ__3_58_} and \eqref{GrindEQ__3_59_}, due to the fact that $\frac{8-2b}{d}<\sigma<\sigma_{c}(s)$ and $b<\min\left\{4,~\frac{d}{2}+2-s\right\}$. And the assumption \eqref{GrindEQ__3_27_} becomes \eqref{GrindEQ__3_46_}.
Similarly, if $d=1,2$, then we can easily verify that $\frac{1}{b_{4}}:=(0)^{+}$ satisfies the fourth equation in \eqref{GrindEQ__3_58_} and \eqref{GrindEQ__3_59_}, using the fact that $\frac{8-2b}{d}<\sigma<\sigma_{c}(s)$ and $b<\frac{3}{2}d-s$. And the assumption \eqref{GrindEQ__3_27_} becomes
\eqref{GrindEQ__3_47_}.

Next, we consider the case $0<s<1$. Using the fact that $b<\min\left\{4,~\frac{d}{2}+2-s,~\frac{3}{2}d-s\right\}$ and $\frac{8-2b}{d}<\sigma<\sigma_{c}(s)$, we can easily prove the existence of $(a_{4},b_{4})$ satisfying \eqref{GrindEQ__3_58_}, whose proof will be omitted.
And the assumption \eqref{GrindEQ__3_27_} becomes trivial, due to the fact $s<1$.
\end{proof}

Using Lemma \ref{lem 2.7.} (Strichartz estimates) and the above nonlinear estimates (Lemmas \ref{lem 3.2.} and \ref{lem 3.3.}), we prove Theorem \ref{thm 1.1.}.
\begin{proof}[{\bf Proof of Theorem \ref{thm 1.1.}}]
Let $M>0$ which will be chosen later. We define the complete metric space $(D,d)$ as follows:
\begin{equation} \nonumber
D=\left\{u\in L^{p_{1}}(\mathbb R, H_{q_{1}}^{s}):~\left\|u\right\|_{X} \le M\right\},
\end{equation}
\begin{equation} \nonumber
\left\|u\right\|_{X}:=\sum_{i=1}^{2}{\left(\left\|u\right\|_{L^{p_{i}}(\mathbb R, H_{q_{i}}^{s})}+\left\|u\right\|_{L^{\bar{p}_{i}}(\mathbb R, H_{\bar{q}_{i}}^{s})}\right)}+\sum_{k=3}^{4}{\sum_{j=1}^{3}{\left\|u\right\|_{L^{p_{k_{j}}}(\mathbb R,H_{q_{k_{j}}}^{s})}}},
\end{equation}
\begin{equation} \nonumber
d(u,v):=\sum_{i=1}^{2}{\left(\left\|u\right\|_{L^{p_{i}}(\mathbb R, L^{q_{i}})}+\left\|u\right\|_{L^{\bar{p}_{i}}(\mathbb R, L^{\bar{q}_{i}})}\right)}+\sum_{k=3}^{4}{\sum_{j=1}^{3}{\left\|u\right\|_{L^{p_{k_{j}}}(\mathbb R,L^{q_{k_{j}}})}}},
\end{equation}
where $(p_{i},q_{i})\in B$, $(\bar{p}_{i},\bar{q}_{i})\in B$ $(i=1,2)$ are given in Lemma \ref{lem 3.2.} and $(p_{k_{j}},q_{k_{j}})\in B$ $(k=3,4,~j=1,2,3)$ are given in Lemma \ref{lem 3.3.}.
Now we consider the mapping
\begin{equation}\label{GrindEQ__3_60_}
G:~u(t)\to e^{it\Delta^{2}}u_{0} -i\lambda \int _{0}^{t}e^{i(t-\tau)\Delta^{2}}|x|^{-b}|u(\tau)|^{\sigma}u(\tau)d\tau .
\end{equation}
Lemma \ref{lem 2.7.} (Strichartz estimates) yields that
\begin{eqnarray}\begin{split} \label{GrindEQ__3_61_}
\left\|Gu\right\|_{X}
&\lesssim \left\| u_{0} \right\| _{H^{s} }+\left\|\chi(x)|x|^{-b}|u|^{\sigma}u\right\|_{L^{a_{1}'}(\mathbb R, L^{b_{1}'})}\\
&~+\left\|(1-\chi(x))|x|^{-b}|u|^{\sigma}u\right\|_{L^{a_{2}'}(\mathbb R, L^{b_{2}'})}\\
&~+\left\|\chi(x)|x|^{-b}|u|^{\sigma}u\right\|_{L^{a_{3}'}(\mathbb R,\dot{H}_{b_{3}'}^{s-\gamma_{a_{3}',b_{3}'}-4})}\\
&~+\left\|(1-\chi(x))|x|^{-b}|u|^{\sigma}u\right\|_{L^{a_{4}'}(\mathbb R,\dot{H}_{b_{4}'}^{s-\gamma_{a_{4}',b_{4}'}-4})},
\end{split}\end{eqnarray}
where $(a_{i}, b_{i})$ $(i=\overline{1,4})$ are given in Lemmas \ref{lem 3.2.} and \ref{lem 3.3.}.
Using \eqref{GrindEQ__3_61_}, Lemmas \ref{lem 3.2.} and \ref{lem 3.3.}, we have
\begin{equation}\label{GrindEQ__3_62_}
\left\|Gu\right\|_{X} \le C \left\| u_{0} \right\| _{H^{s} }+CM^{\sigma+1}.
\end{equation}
Similarly, using Lemma \ref{lem 2.7.} (Strichartz estimates) and Lemma \ref{lem 3.3.}, we have
\begin{eqnarray}\begin{split}\label{GrindEQ__3_63_}
d(Gu,Gv)&\lesssim \left\|\chi(x)|x|^{-b}(|u|^{\sigma }+|v|^{\sigma})|u-v|\right\|_{L^{a_{1}'}(\mathbb R, L^{b_{1}'})}\\
&~+\left\|(1-\chi(x))|x|^{-b}(|u|^{\sigma }+|v|^{\sigma})|u-v|\right\|_{L^{a_{2}'}(\mathbb R, L^{b_{2}'})}\\
&\le 2CM^{\sigma}d(u,v).
\end{split}\end{eqnarray}
Put $M=2C\left\| u_{0} \right\| _{H^{s} } $ and $\delta =2\left(4C\right)^{-\frac{\sigma +1}{\sigma }}$.
If $\left\| u_{0} \right\| _{H^{s} } \le \delta $, then we have $CM^{\sigma } \le \frac{1}{4} $.
Hence it follows from \eqref{GrindEQ__3_62_} and \eqref{GrindEQ__3_63_} that $G:(D,d)\to (D,d)$ is a contraction mapping.
So there is a unique global solution $u(t,x)$ of \eqref{GrindEQ__1_1_} in $(D,d)$.
Furthermore, by Lemma \ref{lem 2.7.} (Strichartz estimates), we have \eqref{GrindEQ__1_12_}.
The proof of scattering result is standard (see e.g. \cite{AK212}) and we omitted the details.
\end{proof}

%%%%%%%%%%%%%%%%%%%%%%%%%%%%%%%%%%%%%%%%%%%%%%%%%%%%%%%%%%%%%%%%%%%%%%%%%%%%%
%\section*{References}

\end{document}